\documentclass[a4paper,12pt,intlimits,oneside]{amsart}
\usepackage{amsmath,amsthm,amssymb}
\usepackage{dsfont}
\usepackage{mathrsfs}
\usepackage[utf8]{inputenc}
\usepackage[style=alphabetic,maxnames=200,maxalphanames=6,giveninits]{biblatex}
\usepackage[colorlinks=true]{hyperref}

\usepackage{esint}
\usepackage{amsthm}
\usepackage{latexsym}
\usepackage{amssymb}
\usepackage{xcolor}
\usepackage{hyperref}
\usepackage{bbold}
\usepackage{enumerate}

\makeatletter
\numberwithin{equation}{section}
\theoremstyle{plain}
        \newtheorem{theorem}{Theorem}[section]
        \newtheorem{proposition}[theorem]{Proposition}
        \newtheorem{lemma}[theorem]{Lemma}
        \newtheorem{corollary}[theorem]{Corollary} 
        
        \newtheorem{definition}[theorem]{Definition} 
        \newtheorem{remark}[theorem]{Remark}  
         
        \newtheorem*{claim*}{Claim}

\newtheorem*{theorem*}{Theorem}
\newtheorem*{definition*}{Definition}
\newtheorem*{proposition*}{Proposition}
\usepackage{graphicx}
\let\oldmarginpar\marginpar
\renewcommand\marginpar[1]{\-\oldmarginpar[\raggedleft\footnotesize #1]
{\raggedright\footnotesize #1}}
\newcommand \be  {\begin{equation}}
\newcommand \ee  {\end{equation}}
\newcommand \la \langle
\newcommand \ra \rangle

\newcommand \lan {\langle}
\newcommand \ran {\rangle}

\newcommand{\R}{\mathbb{R}}
\newcommand{\C}{\mathbb{C}}
\newcommand{\Q}{\mathbb{Q}}
\newcommand{\T}{\mathbb{T}}
\newcommand{\Z}{\mathbb{Z}}

\newcommand{\N}{\mathbb{N}}

\newcommand\e{\varepsilon}

\renewcommand{\Re}{\mathrm{Re}}
\renewcommand{\Im}{\mathrm{Im}}

\def\build#1_#2^#3{\mathrel{\mathop{\kern 0pt#1}\limits_{#2}^{#3}}}
\def\td_#1,#2{\mathrel{\mathop{\build\longrightarrow_{#1\rightarrow #2}^{}}}}

\title[Soliton and breather resolution for the Szeg\H{o} equation]{Soliton and breather resolution\\ for the cubic Szeg\H{o} flow on the line}

\author[P. Gérard]{Patrick G\'erard}
\author[S. Grellier]{Sandrine Grellier}
\address[P. Gérard]{Laboratoire de Math\'ematiques d'Orsay, Universit\'e Paris-Saclay,  91405 Orsay, France, CNRS}
\email{patrick.gerard@universite-paris-saclay.fr}
\address[S. Grellier]{Institut Denis Poisson, D\'epartement de Math\'ematiques, Universit\'e d'Orl\'eans, 45067 Orl\'eans Cedex 2, France}
\email{sandrine.grellier@univ-orleans.fr}

\begin{document}
\date{\today}

\begin{abstract}
We investigate the long--time behaviour of the solutions of the cubic Szeg\H{o} equation on the line in the Sobolev space $H^{1/2}(\R)$. We prove that, for every datum of which the Lax operator has simple positive spectrum,  the solution asymptotically decouples as an infinite sum of traveling quasi--periodic breather solutions. Under an additional generic condition on the data, we prove that these traveling breather solutions are in fact soliton solutions, leading to a soliton resolution theorem. 
\end{abstract}
\keywords{Cubic Szeg\H{o} equation, evolution flow, Hankel operators, soliton, breather, traveling wave, integrable Hamiltonian systems, Lax pair}
\subjclass[2020]{35B40, 35Q51, 37K10, 47B35}
\maketitle
\section{Introduction}
\subsection{The problem of long--time dynamics for the cubic Szeg\H{o} equation.}

The cubic Szeg\H{o} equation has been introduced by the authors in \cite{GG10} as a model of non--dispersive Hamiltonian evolution on the torus. Its Lax pair structure led to a thorough study of its dynamics \cite{GG15}, \cite{GG17}, \cite{GPInv}, including an explicit formula for the solutions, almost periodicity of the trajectories on the Sobolev space $H^{1/2}$, generic long--time growth of  Sobolev norms $H^s$ for $s>1/2$, and a continuous extension of the flow map to any square integrable data.

On the line, the cubic Szeg\H{o} equation was introduced in \cite{Poc11T}, \cite{Poc11}, where the Lax pair structure was discovered along the lines of \cite{GG10}, where  traveling wave solutions were characterized and proved to be stable, and where the study of the long--time dynamics was initiated. In particular, a soliton resolution theorem was proved for a generic class of rational initial data, while an example was given of a rational initial datum generating growth of $H^s$ Sobolev norms for $s>\frac 12$, in connection with the existence of a multiple eigenvalue for the Lax operator. This  phenomenon of long--time transition to high frequencies was proved to be generic in \cite{GPJEMS}, in connection with the multiplicity of the Lax spectrum.
Finally, an explicit formula for arbitrary solutions and extension of the flow map to square integrable data was proved in \cite{GPCMP}.

On the other hand, recent soliton resolution results were established \cite{GL25}, \cite{GaGM26}, \cite{GaG26} for integrable equations enjoying the same kind of explicit formula as the one in \cite{GPCMP} for the cubic Szeg\H{o} equation on the line. The purpose of this paper is to investigate such a long--time behaviour for the cubic Szeg\H{o} equation on the line, in the light of these results and of the formula discovered in \cite{GPCMP}. Because of the above--mentioned growth of high Sobolev norms and instability generated by multiple Lax positive spectrum, we shall restrict ourselves to long--time behavior in $H^{\frac 12}$ of solutions with simple Lax positive spectrum. In that framework, we prove indeed that any solution can be decomposed as a sum of building--block simple solutions. However, the new phenomenon that we want to highlight is that these building block solutions are not in general solitons, but traveling quasi--periodic breathers. The occurence of breathers in long-time soliton resolutions for the sine-Gordon equation has been recently highlighted in \cite{CLL}. In order to tell more about this phenomenon, we need to introduce the setting of the equation.

\subsection{The setting of the cubic Szeg\H{o} equation on the line}
In the following, $L^2_+(\R)$ denotes the Hardy space of the upper half plane that is the set of functions in $L^2(\R)$ whose Fourier transform is supported on $(0,\infty)$. Here the Fourier transform is given for $f\in L^2(\R)$ by 
$$\hat f(\xi)=\int_\R f(x){\rm e}^{-ix\xi} dx.$$
According to the Paley--Wiener theorem, such functions $f\in L^2_+(\R)$ admit holomorphic extensions to the upper half plane that we still denote by $f$, and given by
$$f(z)=\frac 1{2\pi}\int_0^\infty \hat f(\xi){\rm e}^{iz\xi} d\xi,\; z\in \C_+.$$
In this work, we are interested in the asymptotic behaviour of solutions to the cubic Szeg\H{o} equation
\begin{equation}\label{Szego}\left\{\begin{array}{lcl}i\partial _tu&=&\Pi(|u|^2u),\; u=u(t,x),\; x,t\in\R\\
u(0,x)&=&u_0(x)\end{array}\right.
\end{equation}
where $u\in C(\R, H^{\frac 12}_+(\R))$, where, for every $s\geq 0$, 
$$H^{s}_+(\R):=\{f\in L^2_+(\R),\;\xi \mapsto \xi^{ s}\hat f(\xi)\in L^2_+([0,\infty[)\}$$ equipped with the norm
$$\Vert f\Vert_{H^s}^2:=\frac 1{2\pi}\int_\R (1+\xi^2)^s|\hat f|^2(\xi) d\xi.$$
The global wellposedness of \eqref{Szego} in $H^s_+(\R)$ for every $s\geq \frac 12$ has been established in \cite{Poc11T}. Moreover, it has been shown that \eqref{Szego} enjoys a Lax pair structure, which we briefly describe below.

We recall that the Hankel operator of symbol $u$ is defined on $L^2_+$ as 
$H_u(f)=\Pi(u\overline f)$, when well defined, where $\Pi$ is the orthogonal projector from $L^2$ into $L^2_+$, the so-called Szeg\H{o} projector. Equivalently, one has 
$$\forall f\in L^2(\R),\quad \Pi (f)(x)=\frac1{2\pi}\int_0^\infty {\rm e}^{ix\xi}\hat f(\xi)d\xi$$ and
$$\forall f\in L^2_+(\R),\quad \widehat{H_u(f)}(\xi)=\frac 1{2\pi}\int_0^\infty \hat u(\xi+\eta)\overline{\hat f(\eta)} d\eta.$$ It is well known that a Hankel operator $H_u$ is a Hilbert-Schmidt  operator on $L^2_+(\R)$ whenever $u\in H^{\frac 12}_+(\R)$. 
Moreover, computing the trace of $H_u^2$, one gets 
\begin{equation}\label{TrH2}
Tr(H_u^2)=\frac 1{(2\pi)^2}\int_0^\infty \xi|\hat u|^2(\xi) d\xi=:\frac 1{2\pi}\Vert u\Vert_{\dot H^{\frac 12}}^2.
\end{equation}
Here $\dot H^{\frac 12}$ denotes the homogeneous Sobolev space.
With this notation, for every $u_0\in H^s_+(\R)$ with $s>\frac 12$, the solution $u\in C(\R, H^s_+(\R))$  \eqref{Szego} satisfies
\begin{equation}\label{Laxpair}
\frac{ dH_u}{dt}=[B_u,H_u],
\end{equation}
where $B_u={-iT_{|u|^2}+\frac i2 H_u^2}.$ Here $T_b$ is the Toeplitz operator defined on $L^2_+(\R)$  by $T_b(f)=\Pi(bf).$ 
The Lax pair formula \eqref{Laxpair} implies that $H_{u(t)}$ remains unitarily equivalent to $H_{u_0}$. Indeed,  one may define the operator-valued function $U(t)$ solution of the system 
\begin{equation}\label{Unitary}
\left\{\begin{array}{lcl}\frac {d}{dt}U(t)&=&B_{u(t)}U(t),\; \\
U(0)&=&\mathrm{Id}\end{array}\right.
\end{equation}
Then, one can prove that $U(t)$ is unitary and that
\begin{equation}\label{UHu}
U^*(t)H_{u(t)} U(t)=H_{u_0}.
\end{equation}
This property implies that the spectrum of the self--adjoint positive trace class operator $H_{u(t)}^2$ is independent of $t$. By an easy approximation argument, the latter can be extended to any datum $u_0\in H^{\frac 12}_+$. Moreover, identity \eqref{UHu} plays a fundamental role to obtain an explicit formula for the solution to \eqref{Szego} proved in \cite{GPCMP}. We will recall and extend this formula in the next section.\\

In the work \cite{Poc11T}, the author has characterized the traveling waves for the cubic Szeg\H{o} equation on the line and established their stability.
 Traveling waves correspond to special rational functions of the form 
 \begin{equation}\label{TravelingWaves}
 R_{a,p}(x)=\frac{a}{x+p},\; a\in\C, \; p\in\C_+
 \end{equation} with speed $c:=\frac{|a|^2}{2\Im p}$ and frequency $\omega:=\frac{|a|^2}{4(\Im p)^2}.$
 In other words, the functions \begin{equation}\label{soliton}
 (t,x)\mapsto {\rm e}^{-i\omega t}R_{a,p}(x-ct)
 \end{equation} solve the cubic Szeg\H{o} equation and these are the only possible traveling waves.
At this stage, let us introduce a larger class of special solutions of \eqref{Szego}.

\begin{definition}\label{breather}
Let $N$ be a positive integer and $c$ be a real number. A solution $R\in C(\R,H^{\frac 12}_+(\R))$ of the cubic Szeg\H{o} equation  \eqref{Szego} is called a traveling quasi--periodic  breather of order $N$ and of velocity $c$, if there exist  a vecteur $\omega \in \R^N$ and a continuous function $G:\T^N\to H^{\frac 12}_+(\R)$ such that
$${R(t)(x)=G(t\omega)(x-ct)}.$$
\end{definition}
In particular, traveling waves are traveling (quasi)--periodic breathers of order $1$. The notion of breather is identified in the literature of Hamiltonian PDEs, though it is known to represent rather rare solutions in the context of dispersive equations, see e.g. \cite{CaW24} and references therein. In the case of the non--dispersive equation \eqref{Szego}, it will turn out to be a building block of the dynamics. Notice that examples of traveling quasi--periodic  breathers of any order $N$ can be built {as rational functions of $x$ } using the solution of the inverse spectral problem in \cite{GPJEMS}.
\subsection{Statement of the results} Our main result can be stated as follows.
\begin{theorem}\label{maintheorem}
Let $u_0\in H^{\frac 12}_+(\R)$ be such that positive eigenvalues of $H_{u_0}^2$ are all simple. {Then there exists a countable set $\mathcal V$ of positive numbers,
a family $(N_c)_{c \in \mathcal V}$ of positive integers such that $\sum_{c \in \mathcal V} cN_c <\infty$, and a family $(R_c)_{c\in \mathcal V} $ of traveling quasi-periodic breather solutions of \eqref{Szego} of order $N_c $ and of velocity $c $ such that
\[\lim_{t\to \pm \infty}\left \Vert u(t)-\sum_{c \geq \e}R_c (t)\right \Vert _{H^{\frac 12}}\td_\e, {0^+} 0.\]
}\end{theorem}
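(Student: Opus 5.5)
Our strategy follows the approach of the soliton resolution results in \cite{GL25}, \cite{GaGM26}, \cite{GaG26}, built on the explicit formula of \cite{GPCMP}. We recall that formula in the form: for $z\in\C_+$,
\[u(t,z)=\frac{1}{2i\pi}\, I_+\big((G^*-t\,\mathrm{Id}\cdot\text{correction}-z)^{-1}\Pi u_0\big),\]
or more precisely $u(t,z)=\frac{1}{2i\pi}I_+\big((X^*+tH_{u_0}^2/2\,{\rm e}^{-itH_{u_0}^2}\dots-z)^{-1}{\rm e}^{-itH_{u_0}^2}u_0\big)$. Here $X^*$ is the adjoint of multiplication by $x$ on $L^2_+$, and $I_+(f)=\hat f(0^+)$. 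The essential feature is that the time dependence enters only through the unitary groups ${\rm e}^{\pm itH_{u_0}^2}$, ${\rm e}^{\pm itK_{u_0}^2}$ and through a shift of $X^*$ by $t$ times a function of these operators.

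\textbf{Step 1: spectral decomposition.} Diagonalize $H_{u_0}^2$. Since it is trace class and its positive eigenvalues $\lambda_j^2$ are simple, $L^2_+=\ker H_{u_0}\oplus\bigoplus_j \C e_j$. Write $u_0=\sum_j u_{0,j}$, where $u_{0,j}$ is the component of $u_0$ on $e_j$ (its component on $\ker H_{u_0}$ vanishes, because $u_0\in\overline{\mathrm{Ran}\,H_{u_0}}$). The shifted generator acts on the $j$-th spectral piece as $X^*$ translated by a velocity $c=c(\lambda)$. We take $\mathcal V$ to be the set of values of these velocities, and $N_c$ to be the number of distinct frequencies (eigenvalues of $H_{u_0}^2$ and of $K_{u_0}^2$, interlaced) attached to velocity $c$. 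The summability $\sum cN_c<\infty$ should follow from $\mathrm{Tr}(H_{u_0}^2)<\infty$ and \eqref{TrH2}, provided $c$ is comparable to the eigenvalues.

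\textbf{Step 2: building blocks.} For each $c\in\mathcal V$, let $P_c$ be the spectral projector onto the corresponding finite group of modes. Define $R_c(t)$ by the explicit formula with $u_0$ replaced by a finite-rank datum $u_{0,c}$, chosen via the inverse spectral theory of \cite{GPJEMS} so that its Hankel data are exactly those of the group $c$. The quasi-periodic torus parametrization $G(t\omega)(x-ct)$ comes from factoring out the translation, which commutes appropriately. What remains depends on $t$ only through the phases ${\rm e}^{-it\lambda^2}$, with finitely many frequencies, which gives $\omega\in\R^{N_c}$.

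\textbf{Step 3: asymptotic decoupling.} We first truncate. The modes with velocity $<\e$ contribute $o(1)$ in $H^{1/2}$ uniformly in $t$ as $\e\to0$; this uses conservation of $\mathrm{Tr}H_u^2$, which controls $\dot H^{1/2}$, together with $L^2$ conservation. For the finitely many remaining velocities, we show that for $|t|\to\infty$ the different packets separate spatially, each localized near $x=ct$. The resolvent $(X^*-t\,\cdot-z)^{-1}$ then approximately block-diagonalizes, because cross terms between blocks with $c\ne c'$ involve resolvents at separated points and decay like $1/(|c-c'||t|)$. This gives $u(t)-\sum_{c\ge\e}R_c(t)\to0$ in $L^2$. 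We upgrade to $H^{1/2}$ by matching the conserved quantities: $\|\cdot\|_{L^2}^2$ and $\mathrm{Tr}H^2$ are asymptotically additive over separated pieces.

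\textbf{Main obstacle.} The hardest part will be Step 3, specifically controlling the off-diagonal interaction terms in the resolvent uniformly in the infinite-dimensional setting. I would first try to prove a weak convergence / localization statement via the explicit formula, and then promote it to strong $H^{1/2}$ convergence using the norm-matching argument above.
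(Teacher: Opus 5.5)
\textbf{There is a genuine gap, starting in Step 1.} The explicit formula you need is $u(t,z)=\frac1{2i\pi}I_+\big((X^*+L_{u_0}(t)-z)^{-1}{\rm e}^{-itH_{u_0}^2}u_0\big)$, with $L_{u_0}(t)=\frac1{2\pi}\int_0^t\langle\cdot,{\rm e}^{-isH_{u_0}^2}u_0\rangle\,{\rm e}^{-isH_{u_0}^2}u_0\,ds$. No second Hankel operator $K_{u_0}$ enters.

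Expanding in an orthonormal eigenbasis $(\varphi_j)$ of $H_{u_0}^2$ gives $L_{u_0}(t)=\frac{t}{2\pi}\sum_j|\langle u_0,\varphi_j\rangle|^2\langle\cdot,\varphi_j\rangle\varphi_j$ plus off-diagonal terms whose coefficients are bounded, quasi-periodic and do not decay. So the velocity of the $j$-th mode is $|\langle u_0,\varphi_j\rangle|^2/2\pi$, which is not a function of the eigenvalue. The consequences are:
\begin{enumerate}
\item The packets are $J_\mu=\{j:|\langle u_0,\varphi_j\rangle|=\mu\}$, and $N_c=|J_\mu|$ counts eigenvalues of $H_{u_0}^2$ only.
\item $\sum_c cN_c=\|u_0\|_{L^2}^2/2\pi$ follows from Parseval, not from the trace.
\item If $c$ depended on $\lambda$ alone, simple spectrum would generically make every packet a singleton, and you would only ever see solitons. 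The breathers arise exactly when distinct eigenvalues share the same $|\langle u_0,\varphi_j\rangle|$; the non-decaying intra-packet coefficients then produce the quasi-periodicity.
\end{enumerate}

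Before this bookkeeping makes sense, you also need three facts absent from your sketch:
\begin{enumerate}
\item Eigenfunctions of $H_{u_0}^2$ for nonzero eigenvalues lie in $\mathrm{Dom}\,X^*$, so that the compressed operator $\mathcal P_\mu X^*$ on $E_\mu$ is defined. The paper gets this from the commutator $[S(\eta)^*/\eta,H_u^2]$.
\item $\langle u_0,\varphi_j\rangle\neq0$ for all $j$, so there are no zero-velocity modes.
\item $\mathcal P_\mu X^*+L_\mu(\theta)$ has no real eigenvalue, so that the profile $G_\mu(\theta,\cdot)$ has no poles on the real line.
\end{enumerate}

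\textbf{Second, Steps 2--3 assume what has to be proved.} Building $R_c$ by inverse spectral theory ``with the Hankel data of the packet'' leaves the central point open. You must show that the asymptotic profile of $u(t,\cdot+ct)$ produced by the explicit formula (the resolvent compressed to $E_\mu$) is the Szeg\H{o} evolution of some datum. The paper takes the compressed formula as the definition of $R_\mu$ and proves it solves the equation by a Ces\`aro argument. In the moving frame, the residual $i\partial_tR_\mu-\Pi(|R_\mu|^2R_\mu)$, tested against a Schwartz function, is quasi-periodic in $t$. By the weak convergence, it equals the derivative of a function tending to $0$ plus a function tending to $0$, hence it vanishes identically.

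Likewise, decoupling of separated packets only yields $\lim_{t\to\infty}\|u(t)-v_\varepsilon(t)\|_{H^{1/2}}^2=\|u_0\|_{H^{1/2}}^2-\sum_{c\ge\varepsilon}\|R_c^0\|_{H^{1/2}}^2$. This tends to $0$ as $\varepsilon\to0$ only once you know $\|R_\mu^0\|_{L^2}^2=\|\mathcal P_\mu u_0\|_{L^2}^2$ and $\mathrm{Tr}\,H_{R_\mu^0}^2=\sum_{j\in J_\mu}\sigma_j^2$. The paper proves the first by a contour integral. It proves the second by showing, again through quasi-periodic cancellation, that each $\sigma_j^2$ with $j\in J_\mu$ is an eigenvalue of $H_{R_\mu^0}^2$.

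Finally, your claim that $u(t)-\sum_{c\ge\varepsilon}R_c(t)\to0$ in $L^2$ for fixed $\varepsilon$ is false, since the slow packets keep their mass. The ``contribution of slow modes'' to a nonlinear solution is also not defined. The tail is controlled only through the limiting norm identity above.
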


In the next sections, we will see how the functions {$R_c$} are defined in terms of the initial datum $u_0$ and the spectral theory of $H_{u_0}^2$, see Theorem \ref{Main2} below. In particular, we will prove that $R_{{c}} (t,\cdot )$ is in fact a rational function with a denominator of degree $N_{{c}}$.

 Let us consider a special case of Theorem \ref{maintheorem} , related to the following definition.

\begin{definition}
A function $u\in H^{\frac 12}_+(\R)$  is said strongly generic if all the non zero eigenvalues of the trace-class operator $H_u^2$ are simple and if, denoting by  $(\varphi_j)$  an orthonormal system of eigenfunctions of $H_u^2$ corresponding to these eigenvalues,  $|\langle u,\varphi_j\rangle| \neq |\langle u,\varphi_k\rangle|$ for any $j\neq k$. 
\end{definition} 
\begin{remark} One can easily check that the set of strongly generic functions is a dense $G_\delta$ subset in $H^{\frac 12}_+$. The terminology "strongly generic" has been introduced in \cite{Poc11} where a soliton resolution result was proved for strongly generic rational solutions. We generalize this result in the following corollary. 
\end{remark}
\begin{corollary}\label{solitonres}
Assume $u_0$ is strongly generic in $H^{\frac 12}_+(\R)$. Then the breathers $R_\mu$ in Theorem \ref{maintheorem} are traveling waves, so that
there exist $(p_j) \in \C_+^\N$, $(a_j)\in\C^\N$  such that 
the Szeg\H{o} solution $u$ associated to $u_0$ satisfies 
$$\lim_{t\to \pm \infty} \left \Vert u(t,\cdot)-\sum_{j=1}^N {\rm e}^{-it\omega_j}R_{a_j, p_j}(\cdot -c_jt)\right \Vert_{H^{\frac 12}_+}\td_N,\infty 0.$$
Here $\omega_j=\frac{|a_j|^2}{4(\Im p_j)^2}$ and $c_j=\frac{|a_j|^2}{2\Im p_j}$.
\end{corollary}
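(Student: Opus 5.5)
We deduce Corollary \ref{solitonres} from Theorem \ref{maintheorem}, using the description of the breathers $R_c$ in terms of the spectral data of $H_{u_0}^2$ given later in Theorem \ref{Main2}. The expected structure, which we take from that theorem, is as follows. To each simple positive eigenvalue $\lambda_j^2$ of $H_{u_0}^2$, with normalized eigenfunction $\varphi_j$, one attaches a velocity $c_j$ determined by $|\langle u_0,\varphi_j\rangle|^2$ (and possibly $\lambda_j$). The breather $R_c$ then collects all indices $j$ with $c_j=c$, and its order is $N_c=\#\{j: c_j=c\}$. So the corollary reduces to showing that, when $u_0$ is strongly generic, every velocity class is a singleton, i.e. $N_c=1$ for all $c\in\mathcal V$.

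\emph{Step 1 (velocities separate).} We identify the formula for $c_j$. By analogy with the single soliton $R_{a,p}$, where $c=|a|^2/(2\Im p)$, we expect $c_j$ to equal a constant multiple of $|\langle u_0,\varphi_j\rangle|^2$. This is the quantity that governs the growth in time of the phase $\langle u(t),\varphi_j\rangle$ in the explicit formula of \cite{GPCMP}. Also $|\langle u,\varphi_j\rangle|$ is conserved along the flow, by \eqref{UHu} and the conservation of $\Pi$-moments. The strong genericity hypothesis $|\langle u_0,\varphi_j\rangle|\neq|\langle u_0,\varphi_k\rangle|$ for $j\ne k$ then gives $c_j\neq c_k$, hence $N_c=1$. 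If instead $c_j$ turns out to depend on both $\lambda_j$ and $|\langle u_0,\varphi_j\rangle|$, we must check that the combination is injective under the hypothesis. Pinning down this dependence from Theorem \ref{Main2} is the main obstacle. We would first try to read it off from the generator of the translation in the explicit formula, namely the operator $X^*$ compressed to the eigenspace.

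\emph{Step 2 (order-one breathers are traveling waves).} Let $R$ be a traveling breather of order $1$ and velocity $c$. Since $R_c(t,\cdot)$ is rational with a denominator of degree $N_c=1$, it has the form $R(t,x)=a(t)/(x+p(t))$. Writing $R(t)(x)=G(t\omega)(x-ct)$, we get $p(t)=p_0-ct$ and $a(t)=a(t\omega)$, with $a$ periodic. Plugging this into \eqref{Szego}, and using the known computation for $R_{a,p}$, gives $|a(t)|$ constant and $a(t)=a_0{\rm e}^{-i\omega t}$. The relations $c=|a_0|^2/(2\Im p_0)$ and $\omega=|a_0|^2/(4(\Im p_0)^2)$ then follow, consistent with Pocovnicu's classification \cite{Poc11T} of traveling waves. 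Alternatively we invoke that classification directly, since the traveling waves \eqref{soliton} are the only solutions of this form.

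\emph{Step 3 (reindexing).} Enumerate the countable set $\mathcal V=\{c_j\}$ by $j\in\N$, so that $R_{c_j}(t)={\rm e}^{-it\omega_j}R_{a_j,p_j}(\cdot-c_jt)$. The summability $\sum_c cN_c<\infty$ yields $c_j\to0$. Hence the sums over $\{c\ge\e\}$ are the partial sums $\sum_{j\le N}$ along a suitable ordering by decreasing velocity, with $N\to\infty$ as $\e\to0^+$. The limit statement of Theorem \ref{maintheorem} therefore becomes the stated one. For a general enumeration, we control the tails using the $H^{1/2}$-smallness of $\sum_{c<\e}R_c$, which is uniform in time since $\|R_c(t)\|_{\dot H^{1/2}}^2$ is conserved and summable by \eqref{TrH2}.
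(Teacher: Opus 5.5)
Your reduction is the right one, but as written Step 1 is not proved: you state the velocity formula as an ``expectation'' and call it the main obstacle. In fact it is part of the statement of Theorem \ref{Main2}. After normalizing phases so that $\mu_j:=\langle u_0,\varphi_j\rangle>0$ (Corollary \ref{notperp}), one has $J_\mu=\{j:\ \mu_j=\mu\}$ and $N_\mu=|J_\mu|$, and the velocity is $\mu^2/2\pi$, which depends on $\mu$ only. Strong genericity says exactly that the $\mu_j$ are pairwise distinct, hence $|J_\mu|=1$ for every $\mu$; this is the first line of the paper's proof.

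Some corrections to your heuristics:
\begin{itemize}
\item The drift does not come from $X^*$ compressed to $E_\mu$; that compression gives the positions, $p_j=-\langle X^*\varphi_j,\varphi_j\rangle$. The drift comes from the secular term $\frac{t}{2\pi}\sum_\nu\nu^2\mathcal P_\nu$ of $L_{u_0}(t)$ in \eqref{Lu2}.
\item For fixed $\varphi_j$, the quantity $|\langle u(t),\varphi_j\rangle|$ is not conserved; it is $|\langle u(t),U(t)\varphi_j\rangle|$ that is. Nothing needs this anyway, since everything is expressed through $u_0$.
\item Your fallback would not work. If the velocity also depended on $\sigma_j$, strong genericity would give no injectivity. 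That it does not is precisely what produces genuine breathers when $|J_\mu|>1$.
\end{itemize}

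Your Step 2 takes a different route from the paper. The paper computes $R_{\mu_j}$ directly from \eqref{R_mu}. Since $E_{\mu_j}=\C\varphi_j$, one has $\mathcal P_{\mu_j}L_{u_0}(t)\varphi_j=\frac{\mu_j^2t}{2\pi}\varphi_j$, and the $1\times 1$ inversion gives
\[R_{\mu_j}(t,z)=\frac{i\mu_j^2}{2\pi\sigma_j}\,\frac{\mathrm e^{-i(\alpha_j+t\sigma_j^2)}}{z-\frac{\mu_j^2}{2\pi}t-\langle X^*\varphi_j,\varphi_j\rangle}.\]
This identifies $a_j$, $p_j$, $\omega_j=\sigma_j^2$ and $c_j=\mu_j^2/2\pi$ explicitly from spectral data, with $\Im p_j=\mu_j^2/(4\pi\sigma_j^2)>0$ by Lemma \ref{Xstarff}.

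You instead use only that $R_\mu^0$ has a degree-one denominator and then argue abstractly. This is valid, but it gives the parameters only implicitly. Also, your claim $p(t)=p_0-ct$ does not follow from the breather form, since the pole of $G(\theta)$ could a priori move with $\theta$. It follows only after plugging in, which gives $i\dot a=\frac{|a|^2}{4(\Im p)^2}a$ and $\dot p=-\frac{|a|^2}{2\Im p}$.

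The cleanest version of your argument avoids this altogether. Write $R_\mu^0=a/(x+p)$ with $p\in\C_+$; the function \eqref{soliton} solves \eqref{Szego} with this datum, so uniqueness of the $H^{1/2}_+$ flow gives $R_\mu(t)=\mathrm e^{-i\omega t}R_{a,p}(\cdot-ct)$.

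Step 3 is fine: order $\mathcal V$ decreasingly, using that $\{c\ge\e\}$ is finite.
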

\subsection{Organization of the paper} Let us complete this introduction by describing shortly the structure of the paper. In the next section, we revisit the explicit formula proved in \cite{GPCMP}. Then we prove, as  a fundamental preliminary result, that the eigenfunctions of the Lax operator $H_u^2$ associated to positive eigenvalues belong to the domain of the operator $X^*$ arising in the explicit formula. From this we infer that $u$ is never orthogonal to the eigenspaces of $H_u^2$ associated to positive eigenvalues.
Finally, we prove Theorem \ref{maintheorem} and Corollary \ref{solitonres}. The crucial step is to use the explicit formula for proving that, for every $\mu \in \mathcal M$, 
there exists a quasi--periodic function $g_\mu $ of $t$, valued in rational functions of $L^2_+$, such that $u(t,x+c_\mu t)-g_\mu (t,x)$ converges weakly to $0$ in $H^{\frac 12}_+(\R)$. Then we use the quasi--periodicity of $g_\mu$ to prove that $R_\mu (t,x):=g_\mu (t,x-c_\mu t)$ is a solution of the cubic Szeg\H{o} equation.
The last step is to prove strong convergence in $H^{\frac 12}_+(\R)$, which requires an additional precise calculation of each term in the energy balance. 

\section{Explicit formula}
In this section, we recall the basic definitions and state the explicit formula proved in \cite{GPCMP}.\\
The operator $X$ is the unbounded operator of multiplication by $x$ hence
$${\mathrm {Dom}}\,X:=\{f\in L^2_+(\R);\; xf\in L^2(\R)\}$$
and 
$$x\in\R,\; Xf(x)=xf(x), \; f\in {\mathrm {Dom}}\,X.$$
One has also 
$${\mathrm {Dom}}\,X=\left\{f\in L^2_+(\R),\; \frac d{d\xi}\hat f\in L^2(0,\infty), \;\hat f(0)=0\right\}.$$  

We will use its adjoint $X^*$. By definition 
$${\mathrm {Dom}}\,X^*:=\left\{f\in L^2_+,\; \exists g\in L^2_+, \langle g,h\rangle =\langle f,X h \rangle,\; \forall h\in \mathrm{Dom}X\right\}.$$ With these notations, one has $ X^*f=g$ for $f\in \mathrm{Dom} X^*$.\\
On the Fourier side, it gives
$${\mathrm {Dom}}\,X^*=\left\{f\in L^2_+,\; \frac d{d\xi}\hat f\in L^2(0,\infty)\right\}$$
$$\xi>0,\;\widehat{X^*f}(\xi)=i\frac d{d\xi}\hat f(\xi),\; \; f\in {\mathrm {Dom}}\,X^*$$
so that, 
\begin{equation}\label{DomcotX}
{\mathrm {Dom}}\,X^*:=\left\{f\in L^2_+,\; \exists \lambda_f\in\C, \; xf+\lambda_f\in L^2(\R)\right\}\end{equation} and $X^*f=Xf+\lambda_f$. As $\hat f$ is right continuous at $0$, $\lambda_f=\frac 1{2i\pi} I_+(f)$ a posteriori.\\
Here $I_+(f)=\hat f(0^+):=\lim_{\varepsilon\to 0}\langle f,\chi_\varepsilon\rangle$, $\chi_{\varepsilon}(x)=\frac 1{1-i\varepsilon x}$ when this limit exists.

The operator $iX$ has infinitesimal generator given by the Lax-Beurling semigroup $S(\eta)$ defined on $L^2_+(\R)$ by $S(\eta)f(z):={\rm e}^{i\eta z}f(z)$, $\eta>0$, $z\in\C_+$. In other words, for $f\in {\mathrm {Dom}}\,X$, $$iXf=\lim_{\eta\to 0}\frac{S(\eta)f-f}{\eta }.$$ 
The infinitesimal operator related to $-iX^*$ is $S^*(\eta)f={\rm e}^{-i\eta z} f(z)$ and $$-iX^*f(z):=\lim_{\eta\to 0}\frac{S^*(\eta)f(z)-f(z)}{\eta}.$$
This leads to a generalization of the Cauchy formula.
\begin{proposition}\label{Cauchy}
For every $f\in L^2_+(\R)$, every $z\in \C_+$, 
$$(X^*-z{\rm Id})^{-1}f(x)=\frac{f(x)-f(z)}{x-z},\; f(z)=\frac 1{2i\pi}I_+((X^*-z{\rm Id})^{-1}f).$$
\end{proposition}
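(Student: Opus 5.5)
Fix $f\in L^2_+(\R)$ and $z\in\C_+$, and set
$$g(x):=\frac{f(x)-f(z)}{x-z}.$$
The plan is to show that $g\in \mathrm{Dom}\,X^*$ and that $(X^*-z)g=f$. Since $X^*-z$ is injective for $\Im z>0$, this identifies $g$ as $(X^*-z{\rm Id})^{-1}f$. The formula for $f(z)$ then follows by computing $I_+(g)$.

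\textbf{Step 1: $g\in L^2_+$.} For $|x|$ large, $|g(x)|\lesssim (|f(x)|+|f(z)|)/|x|$, which is square integrable there. Near $x=z$ there is no singularity on the real line, since $z\notin\R$. So $g\in L^2(\R)$.

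Next, $f(z)/(x-z)$ lies in $L^2_-$, because its Fourier transform is supported on $(-\infty,0)$. Hence I expect the cleanest route to go through the Cauchy integral. The Cauchy formula gives $f(z)=\frac1{2i\pi}\int\frac{f(x)}{x-z}dx$, and this is the pairing $\langle f,\overline{\cdot}\rangle$ with the kernel $\frac{1}{2i\pi(x-\bar z)}$. Equivalently, one can argue on the Fourier side. For $\xi>0$,
$$\widehat{f/(x-z)}(\xi)=i\int_0^\xi \hat f(\eta)e^{iz(\eta-\xi)}\,d\eta ,$$
while the negative-frequency part of $f/(x-z)$ equals exactly $f(z)/(x-z)$, by the Paley--Wiener formula for $f(z)$. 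Subtracting shows that $g=\Pi(f/(x-z))\in L^2_+$.

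\textbf{Step 2: $g\in\mathrm{Dom}\,X^*$ and $(X^*-z)g=f$.} From the definition of $g$,
$$xg=f+zg-f(z),\qquad\text{so}\qquad xg+f(z)=f+zg\in L^2.$$
By characterization \eqref{DomcotX}, this gives $g\in\mathrm{Dom}\,X^*$ with $\lambda_g=f(z)$ and $X^*g=xg+f(z)=f+zg$. Thus $(X^*-z)g=f$.

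\textbf{Step 3: injectivity of $X^*-z$.} On the Fourier side, $(X^*-z)h=0$ reads $i\hat h'=z\hat h$ on $(0,\infty)$. Its solutions are $\hat h=Ce^{-iz\xi}$, and $|e^{-iz\xi}|=e^{\Im z\,\xi}$ grows, so this is not in $L^2(0,\infty)$ unless $C=0$. Hence $X^*-z$ is injective, and $g=(X^*-z)^{-1}f$.

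\textbf{Step 4: the value $f(z)$.} The text following \eqref{DomcotX} states that $\lambda_h=\frac1{2i\pi}I_+(h)$ for every $h\in\mathrm{Dom}\,X^*$. Applying this to $h=g$ and using $\lambda_g=f(z)$ from Step 2 yields
$$f(z)=\frac1{2i\pi}I_+\bigl((X^*-z{\rm Id})^{-1}f\bigr).$$
Should one want this relation directly, note that $\hat g$ is absolutely continuous, with $\hat g\in L^2$ and $\hat g'\in L^2$, so $\hat g(0^+)$ exists. Pairing the identity $xg+\lambda_g=X^*g$ against $\chi_\varepsilon$ and letting $\varepsilon\to0$ then recovers the normalization.

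The main obstacle is Step 1, namely the projection bookkeeping that shows $g$ has no negative-frequency part. The rest is formal once \eqref{DomcotX} is accepted.
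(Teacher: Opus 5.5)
Your proof is correct and follows the paper's argument: define $g$, read off from $(x-z)g+f(z)=f$ and \eqref{DomcotX} that $g\in\mathrm{Dom}\,X^*$ with $\lambda_g=f(z)$, then use $\lambda_g=\frac1{2i\pi}I_+(g)$; you also justify $g\in L^2_+$ and the injectivity of $X^*-z$, which the paper leaves implicit. One slip that does not affect the argument: for $\xi>0$ the correct formula is $\widehat{f/(x-z)}(\xi)=i\int_\xi^\infty\hat f(\eta)e^{iz(\eta-\xi)}\,d\eta$, since your range $[0,\xi]$ would force $\hat g(0^+)=0$ and contradict Step 4, but your argument only uses the negative-frequency identity, which is right.
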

\begin{proof} We give the proof for completeness.
For a fixed $z\in\C_+$, denote  by $g_z$ the function defined by
$$g_z(x):=\frac{f(x)-f(z)}{x-z}.$$
From the definition of $X$, we have  ,
$$(X-z{\rm Id})g_z+f(z)=f.$$
Obviously, $g_z\in L^2_+(\R)$, and 
$$(x-z)g_z(x)+f(z)=f(x).$$
We deduce from this equality and from \eqref{DomcotX} that $g_z\in \mathrm{Dom}(X^*)$, that $(X^*-z \mathrm{Id})g_z=f$ and $f(z)=\frac 1{2i\pi}I_+(g_z).$
\end{proof}

Using this last equality and intertwining the unitary operator $U(t)$ \eqref{Unitary} driving the Szeg\H{o} flow leads to the explicit formula obtained by P. Gérard and S. Pushnitski (see \cite{GPCMP}). \begin{theorem}[\cite{GPCMP}]

Let $z\in\C_+$ and $u_0\in H^{\frac 12}_+(\R)$. Then the solution of \eqref{Szego} is given by
\begin{equation}\label{Explicit}
\forall z\in \C_+,\quad u(t,z)=\frac 1{2i\pi} I_+((X^*+L_{u_0}(t)-z)^{-1} {\rm e}^{-itH_{u_0}^2} u_0).
\end{equation}
Here
$$L_{u_0}(t)=\frac 1{2\pi} \int _0^t \langle \cdot, {\rm e}^{-isH_{u_0}^2} u_0\rangle {\rm e}^{-isH_{u_0}^2} u_0 ds.$$
\end{theorem}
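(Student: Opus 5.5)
The plan is to follow the strategy of \cite{GPCMP}: conjugate the Lax evolution by the unitary $U(t)$ of \eqref{Unitary}, and turn the resulting identity into a statement about a resolvent of $X^*$, to which Proposition \ref{Cauchy} applies. I would first prove the formula for smooth data, say $u_0\in H^s_+(\R)$ with $s>1$, and then extend it to $H^{\frac 12}_+$ by continuity of the flow map and of the right-hand side.

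\emph{Step 1: the key identities.} From \eqref{Unitary} and \eqref{UHu} I want three facts.
\begin{itemize}
\item[(a)] $U(t)^*u(t)={\rm e}^{-itH_{u_0}^2}u_0$. To see this, differentiate $U^*u$, using $i\partial_t u=\Pi(|u|^2u)=T_{|u|^2}u$ and $H_u^2 u$. One finds that $\frac{d}{dt}U^*u=U^*(-B_u u+\partial_t u)=-iU^*H_u^2u=-iH_{u_0}^2U^*u$, where the middle equality uses $B_u=-iT_{|u|^2}+\frac i2H_u^2$ together with the identity $H_u^2u$ versus $T_{|u|^2}u$ that holds for Hankel operators. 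Since the coefficients of $B_u$ need care here, I will recompute the constants; in any case (a) is the standard fact from \cite{GPCMP}.
\item[(b)] A commutator formula for $X^*$ with $B_u$. On $\mathrm{Dom}\,X^*$ one has $[X^*,T_b]f=\frac{1}{2i\pi}I_+(bf)\cdot(\ldots)$, a finite-rank correction. Combined with $X^*H_u-H_uX=\ldots$, this gives $[X^*,B_{u}]=\frac{1}{2\pi}\langle\cdot,u\rangle u$ modulo the sign conventions of $H_u$.
\item[(c)] Consequently $\frac{d}{dt}\big(U^*X^*U\big)=U^*[X^*,B_u]U=\frac1{2\pi}\langle\cdot,U^*u\rangle U^*u$. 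Using (a), this integrates to $U(t)^*X^*U(t)=X^*+L_{u_0}(t)$.
\end{itemize}

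\emph{Step 2: conclusion.} By Proposition \ref{Cauchy}, $u(t,z)=\frac1{2i\pi}I_+((X^*-z)^{-1}u(t))$. I then insert $U U^*$: $(X^*-z)^{-1}u(t)=U(X^*+L_{u_0}(t)-z)^{-1}U^*u(t)$. Two further facts close the argument. First, $I_+(Uf)=I_+(f)$, since $\frac{d}{dt}I_+(Uf)=I_+(B_uUf)$ and $I_+(B_u g)=0$; the latter holds because $T_{|u|^2}$ and $H_u^2$ map into functions whose $\hat\ $ vanishes at $0^+$ when $u$ is smooth. Second, $X^*+L-z$ is invertible for $z\in\C_+$. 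This follows because $\Im\langle (X^*+L)f,f\rangle$ is controlled: $L$ is positive self-adjoint, and $-\Im\langle X^*f,f\rangle=\frac{|I_+(f)|^2}{4\pi}\ge0$. So $X^*+L-z$ is dissipative in the appropriate direction, giving a bound on $\|(X^*+L-z)^{-1}\|$ by $1/\Im z$.

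\emph{Main obstacle.} The delicate part is the domain issues in Steps 1(b)--(c): $X^*$ is unbounded, so one must show that $U(t)$ preserves $\mathrm{Dom}\,X^*$ and justify differentiating $U^*X^*U$. I would handle this weakly. I would fix $f\in\mathrm{Dom}\,X^*$ and $h\in\mathrm{Dom}\,X$, differentiate $\langle X^*U f,Uh\rangle$ using the boundedness of $B_u$ for $u\in H^s$, $s>1$, and then close via resolvents. The final passage to $H^{\frac12}$ uses continuity of $u_0\mapsto L_{u_0}(t)$ and $u_0\mapsto{\rm e}^{-itH^2_{u_0}}u_0$ in $L^2$. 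It also uses the uniform resolvent bound above, together with continuity of $I_+\circ(X^*+L-z)^{-1}$, since $I_+(g)=2i\pi\,\langle\ldots\rangle$ can be written as a pairing with a fixed $L^2$ function.
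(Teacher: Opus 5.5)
Your architecture (conjugate by $U(t)$, apply Proposition \ref{Cauchy}, then approximate from smooth data) is the right one; it is the one behind the proof of Corollary \ref{cor:explicit}. However, the three identities it rests on are each false as stated, and the target formula comes out only because the errors cancel in pairs.

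First, $U(t)^*u(t)={\rm e}^{-\frac{it}{2}H_{u_0}^2}u_0$, not ${\rm e}^{-itH_{u_0}^2}u_0$. With $B_u=-iT_{|u|^2}+\frac i2H_u^2$ and $\partial_tu=-iT_{|u|^2}u$, the $T_{|u|^2}u$ terms cancel exactly and $-B_uu+\partial_tu=-\frac i2H_u^2u$; no Hankel identity is involved.

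Second, on the Fourier side one finds $[X^*,T_b]f=\frac{i}{2\pi}I_+(f)\,\Pi b$. With Corollary \ref{CommutatorXstar} and $\Pi(|u|^2)=H_u(u)$ this gives $[X^*,B_u]f=\frac{1}{4\pi}\bigl(I_+(f)H_u(u)+\langle f,u\rangle u\bigr)$. The term $I_+(f)H_u(u)$ is a genuine rank-one contribution, not a sign convention. So $\frac{d}{dt}(U^*X^*U)$ is not $\frac1{2\pi}\langle\cdot,U^*u\rangle U^*u$, and $U^*X^*U\neq X^*+L_{u_0}(t)$.

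Third, $I_+\circ U(t)\neq I_+$. One has $I_+(T_{|u|^2}g)=I_+(H_u^2g)=\langle g,H_u(u)\rangle$, hence $I_+(B_ug)=-\frac i2\langle g,H_u(u)\rangle$. This is nonzero, for instance, for $g=H_u(u)$, since $\Pi(|u|^2)\neq 0$. Neither $T_{|u|^2}$ nor $H_u^2$ maps into functions whose Fourier transform vanishes at $0^+$.

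The correct identities are \eqref{Ustarchi}, i.e.\ $I_+(U(t)f)=I_+({\rm e}^{-\frac{it}{2}H_{u_0}^2}f)$, and \eqref{UstarXstarU}, $U^*X^*U={\rm e}^{\frac{it}{2}H_{u_0}^2}(X^*+L_{u_0}(t)){\rm e}^{-\frac{it}{2}H_{u_0}^2}$. For the first, both sides have time derivative $-\frac i2\langle f,{\rm e}^{\frac{it}{2}H_{u_0}^2}H_{u_0}u_0\rangle$ and agree at $t=0$. The second is obtained by checking that its right-hand side satisfies the same differential equation as $U^*X^*U$. For this you use \eqref{Ustarchi} for the $I_+$ term and the identity $\frac i2[H_{u_0}^2,L_{u_0}(t)]=\frac1{4\pi}\bigl(\langle\cdot,u_0\rangle u_0-\langle\cdot,{\rm e}^{-itH_{u_0}^2}u_0\rangle{\rm e}^{-itH_{u_0}^2}u_0\bigr)$. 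That identity is what turns the coefficient $\frac1{4\pi}$ of the commutator into the $\frac1{2\pi}$ of $L_{u_0}$.

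Then write $u(t,z)=\frac1{2i\pi}I_+\bigl(U(U^*X^*U-z)^{-1}U^*u\bigr)$. The half-phase from $U^*u$ combines with the right conjugation factor to give ${\rm e}^{-itH_{u_0}^2}u_0$, and the half-phase from $I_+\circ U$ cancels the left one. In your version, the right conjugation factor was absorbed into a doubled phase in (a), while the left conjugation factor and the phase in $I_+\circ U$ were both dropped. Steps 1 and 2 have to be redone along these lines.

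Incidentally, invertibility of $X^*+L_{u_0}(t)-z$ then follows directly from the conjugation identity and Proposition \ref{Cauchy}. Your dissipativity argument only gives injectivity and the bound $1/\Im z$, not surjectivity.
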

We will need a more general version of this formula.
\begin{corollary}\label{cor:explicit}
Let $u_0\in H^{s}_+$, $s>\frac 12$. For $f\in L^2_+(\R)$ define 
\begin{equation}\label{Omega}
\forall z\in \C_+,\quad  \Omega_t(f)(z):=\frac 1{2i\pi} I_+((X^*+L_{u_0}(t)-z)^{-1} f),
\end{equation}
then 
\begin{equation}\label{UOmega}
\Omega_t(f)=U(t){\rm e}^{i\frac t2H_{u_0}^2} f.
\end{equation} In particular, $\Omega_t$ is an isometry on $L^2_+(\R)$. In the general case $u_0\in H^{\frac 12}_+$, one has
$$\Vert\Omega_t f\Vert_{L^2}\le \Vert f\Vert_{L^2}.$$
In particular, for $z\in\C_+$, 
\begin{equation}\label{Pointwise}\left|\Omega_t f(z)\right|\le \frac{\Vert f\Vert_2}{2\sqrt{\pi\mathrm{Im}z}}.\end{equation}
  \end{corollary}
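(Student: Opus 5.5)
The plan is to obtain \eqref{UOmega} from the intertwining identity \eqref{UHu} together with the way the Lax–Beurling semigroup and $X^*$ are conjugated by $U(t)$. This is the mechanism that produces \eqref{Explicit} in \cite{GPCMP}. First I will establish the operator identity
\begin{equation*}
U(t)^*\, X^*\, U(t) \;=\; X^* + L_{u_0}(t) \quad\text{(modulo the conjugation by } {\rm e}^{i\frac t2 H_{u_0}^2}\text{)},
\end{equation*}
understood on the domain of $X^*$. More precisely, set $V(t):=U(t){\rm e}^{i\frac t2 H_{u_0}^2}$. I expect the relation to take the form $V(t)^*(X^*-z)V(t)=X^*+L_{u_0}(t)-z$ for $s>\frac12$. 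To prove it, I differentiate $t\mapsto V(t)^*X^*V(t)$ on a dense core, using $\frac{d}{dt}U=B_{u(t)}U$ with $B_u=-iT_{|u|^2}+\frac i2 H_u^2$. The key input is the commutator formula $[X^*,T_b]=0$ modulo a rank-one term and $[X^*,H_u]$ of rank one, namely $[X^*, T_{|u|^2}]f=\frac{i}{2\pi}\langle f,u\rangle u$ up to the precise constant. After conjugating back with \eqref{UHu} and $u(t)=U(t){\rm e}^{-i\frac t2H_{u_0}^2}{\rm e}^{-itH_{u_0}^2}u_0$-type relations, the derivative becomes $\frac1{2\pi}\langle\cdot,{\rm e}^{-itH_{u_0}^2}u_0\rangle{\rm e}^{-itH_{u_0}^2}u_0$. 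Integrating in $t$ then gives $L_{u_0}(t)$.

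Given this conjugation identity, I apply Proposition~\ref{Cauchy} to $g:=V(t)f$. This gives $g(z)=\frac1{2i\pi}I_+((X^*-z)^{-1}g)$ and $(X^*-z)^{-1}V(t)f=V(t)(X^*+L_{u_0}(t)-z)^{-1}f$. It remains to see that $I_+(V(t)h)=I_+(h)$, that is, that $V(t)^*$ preserves the boundary functional defining $I_+$. This is again checked by differentiating in $t$, since $I_+$ is the limit of $\langle\cdot,\chi_\e\rangle$ and $B_u$ acts on $\chi_\e$ with a term vanishing as $\e\to0$; this is the same fact that is used in \cite{GPCMP} to deduce \eqref{Explicit}. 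Putting these together gives $\Omega_t(f)=V(t)f$, hence \eqref{UOmega}. The isometry property follows because $U(t)$ and ${\rm e}^{i\frac t2H_{u_0}^2}$ are unitary. As a consistency check, taking $f={\rm e}^{-itH_{u_0}^2}u_0$ recovers \eqref{Explicit}.

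For general $u_0\in H^{\frac12}_+$, I approximate $u_0$ by $u_0^n\in H^s_+$ in $H^{\frac12}$. Then $H_{u_0^n}\to H_{u_0}$ in Hilbert–Schmidt norm, so $L_{u_0^n}(t)\to L_{u_0}(t)$ in trace norm. For fixed $z\in\C_+$ the resolvents $(X^*+L-z)^{-1}$ converge; their invertibility and uniform bounds I will get from the fact that $\Im(X^*+L)\ge0$ in a suitable dissipative sense, which I expect to be where most care is needed. This convergence gives $\Omega^n_tf(z)\to\Omega_tf(z)$ pointwise in $\C_+$. Since $\|\Omega^n_tf\|_{L^2}=\|f\|_{L^2}$, a weak limit in $L^2_+$ exists, and it coincides with $\Omega_tf$ by pointwise convergence; weak lower semicontinuity then yields $\|\Omega_tf\|\le\|f\|$. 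Finally, \eqref{Pointwise} follows from the Cauchy–Schwarz inequality applied to $h(z)=\frac1{2\pi}\int_0^\infty\hat h(\xi){\rm e}^{iz\xi}d\xi$, using $\int_0^\infty {\rm e}^{-2\xi\Im z}d\xi=\frac1{2\Im z}$ and $\|h\|_2^2=\frac1{2\pi}\|\hat h\|_2^2$.

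The main obstacle is the first step: making the differentiated conjugation identity rigorous on unbounded domains, and getting the exact rank-one commutator constants so that the result is precisely $L_{u_0}(t)$.
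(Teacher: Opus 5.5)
Your plan is essentially the paper's proof. The paper simply quotes from \cite{GPCMP} your two ingredients: $U^*(t)X^*U(t)={\rm e}^{i\frac t2 H_{u_0}^2}(X^*+L_{u_0}(t)){\rm e}^{-i\frac t2 H_{u_0}^2}$ (your $V(t)^*X^*V(t)=X^*+L_{u_0}(t)$), and $U^*(t)\chi_\varepsilon-{\rm e}^{i\frac t2 H_{u_0}^2}\chi_\varepsilon\to 0$ (equivalent to your $I_+(V(t)h)=I_+(h)$), inserts them into Proposition~\ref{Cauchy}, and handles the $H^{\frac12}_+$ case and \eqref{Pointwise} by approximation and Cauchy--Schwarz as you do. If you write out the details, fix three slips: the sign is $\Im\langle (X^*+L_{u_0}(t))f,f\rangle=-\frac1{4\pi}|I_+(f)|^2\le 0$ (Lemma~\ref{Xstarff}), which is what gives $\Vert (X^*+L_{u_0}(t)-z)^{-1}\Vert\le 1/\Im z$ for $z\in\C_+$; $[X^*,T_{|u|^2}]f=\frac{i}{2\pi}I_+(f)\Pi(|u|^2)$, while the $\langle f,u\rangle u$ term arises only in $[X^*,H_u^2]$ (Corollary~\ref{CommutatorXstar}), so cancelling the resulting $I_+$-terms when you differentiate $V(t)^*X^*V(t)$ already uses $I_+(V(t)h)=I_+(h)$, which must therefore be established first; and since $I_+$ is not continuous on $L^2$, the pointwise convergence $\Omega_t^nf(z)\to\Omega_tf(z)$ should be derived from $\Omega_tf(z)=f(z)-\bigl[L_{u_0}(t)(X^*+L_{u_0}(t)-z)^{-1}f\bigr](z)$, which follows from Proposition~\ref{Cauchy}.
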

  \begin{proof}
  The proof is slightly more general than the proof of the explicit formula proved in \cite{GPCMP} but follows the same lines. In  \cite{GPCMP} the following equations are established (the notations are the same except that operator $A$ is $X$ here)
  \begin{equation}\label{UstarXstarU}U^*(t)X^*U(t)={\rm e}^{i\frac t2 H_{u_0}^2}(X^*+L_{u_0}(t)){\rm e}^{-i\frac t2 H_{u_0}^2}\end{equation} 
   and  \begin{equation}\label{Ustarchi}U^*(t)\chi_\varepsilon- {\rm e}^{i\frac t2 H_{u_0}^2}\chi_\varepsilon\td_\varepsilon,0 0.\end{equation}

   Hence, for any function $g\in L^2_+$, from Proposition \ref{Cauchy}, we write
  \begin{eqnarray*}
 g(z)&=&\frac 1{2i\pi}I_+((X^*-z)^{-1}g)\\
  &=&\frac 1{2i\pi} \lim_{\varepsilon\to 0}\langle (X^*-z)^{-1}g,\chi_{\varepsilon}\rangle\\
  \end{eqnarray*}
  Introducing the unitary operator $U(t)$ and using  \eqref{UstarXstarU}, we get
  \begin{eqnarray*}
 g(z) &=&\frac 1{2i\pi} \lim_{\varepsilon\to 0}\langle (U^*(t)X^*U(t)-z)^{-1}U^*(t)g,U^*(t)\chi_{\varepsilon}\rangle\\
  &=&\frac 1{2i\pi} \lim_{\varepsilon\to 0}\langle ({\rm e}^{i\frac t2 H_{u_0}^2}(X^*+L_{u_0}(t)){\rm e}^{-i\frac t2 H_{u_0}^2}-z)^{-1}U^*(t)g,U^*(t)\chi_{\varepsilon}\rangle\\
   &=&\frac 1{2i\pi} \lim_{\varepsilon\to 0}\langle ((X^*+L_{u_0}(t))-z)^{-1}{\rm e}^{-i\frac t2 H_{u_0}^2}U^*(t)g,{\rm e}^{-i\frac t2 H_{u_0}^2}U^*(t)\chi_{\varepsilon}\rangle\\
   &=&\Omega_t({\rm e}^{-i\frac t2 H_{u_0}^2}U^*(t)g)(z)
  \end{eqnarray*}
  The last equality is obtained thanks to Equation \eqref{Ustarchi}.
  It gives the result for $f={\rm e}^{-i\frac t2 H_{u_0}^2}U^*(t)g$ or $g=U(t){\rm e}^{i\frac t2 H_{u_0}^2}f$.
    As $U(t)$ and ${\rm e}^{i\frac t2 H_{u_0}^2}$ are isometries, $\Omega_t$ is an isometry on $L^2$ for $u_0\in H^{s}_+$, $s>\frac 12$. 
\\
  When $u_0\in H^{\frac 12}_+$, one obtains the boundedness of $\Omega_t$ using an approximation of $u_0$ by a sequence $u_0^\delta \in H^1_+$.
The pointwise estimate \eqref{Pointwise} follows from the inverse Fourier formula and the Cauchy-Schwarz inequality. Namely, write
$$\left|\Omega_tf(z)\right|\le \frac{1}{2\pi}\int_0^\infty \left|\mathrm e^{iz\xi}\widehat{\Omega_tf}(\xi)\right| d\xi\le  \frac{1}{2\pi}\frac 1{\sqrt{2\mathrm{Im}z}}\Vert \widehat{\Omega_tf}\Vert_{L^2}\le \frac{\Vert f\Vert_2}{2\sqrt{\pi\mathrm{Im}z}}.$$
   \end{proof}

 \section{Preliminary results}
 
In this section, we give some preliminary results which can be found essentially in the literature but we prove them for completeness.

   We first establish the following lemma.
    \begin{lemma}\label{Xstarff}
 Let $f\in {\rm Dom}X^*$ then 
 $$\Im \langle X^*f,f\rangle=-\frac 1{4\pi}|I_+(f)|^2.$$
 \end{lemma}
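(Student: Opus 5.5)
We work on the Fourier side, where by the description of $\mathrm{Dom}\,X^*$ the function $\hat f$ is in $L^2(0,\infty)$ with $\hat f'\in L^2(0,\infty)$, and $\widehat{X^*f}=i\hat f'$. Plancherel, with the normalisation $\langle g,h\rangle=\frac1{2\pi}\int \hat g\,\overline{\hat h}\,d\xi$, gives
$$\langle X^*f,f\rangle=\frac{i}{2\pi}\int_0^\infty \hat f'(\xi)\overline{\hat f(\xi)}\,d\xi .$$
Taking imaginary parts,
$$\Im\langle X^*f,f\rangle=\frac1{2\pi}\,\Re\int_0^\infty \hat f'\,\overline{\hat f}\,d\xi=\frac1{4\pi}\int_0^\infty \frac{d}{d\xi}|\hat f|^2\,d\xi .$$

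The key step is to justify that this last integral equals $-|\hat f(0^+)|^2$. Since $\hat f\in H^1(0,\infty)$, it is absolutely continuous on $[0,\infty)$ with a limit at $0^+$. Moreover $|\hat f|^2$ has an integrable derivative $2\Re(\hat f'\overline{\hat f})$, by Cauchy–Schwarz. So $|\hat f|^2$ has a limit at $+\infty$, and this limit must be $0$ because $|\hat f|^2$ is integrable. Hence
$$\int_0^\infty \frac{d}{d\xi}|\hat f|^2\,d\xi=-|\hat f(0^+)|^2 .$$

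It remains to identify $\hat f(0^+)$ with $I_+(f)$. We compute $\langle f,\chi_\varepsilon\rangle=\frac1{2\pi}\int_0^\infty \hat f(\xi)\overline{\hat\chi_\varepsilon(\xi)}\,d\xi$. Since $\chi_\varepsilon(x)=\frac{1}{1-i\varepsilon x}$, its transform is $\hat\chi_\varepsilon(\xi)=\frac{2\pi}{\varepsilon}e^{-\xi/\varepsilon}\mathbb 1_{\xi>0}$. This is an approximate identity of mass $2\pi$ concentrating at $0^+$. By continuity of $\hat f$ at $0^+$, $\langle f,\chi_\varepsilon\rangle\to\hat f(0^+)$, that is, $I_+(f)=\hat f(0^+)$, as the paper already notes.

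Combining the three steps yields
$$\Im\langle X^*f,f\rangle=-\frac1{4\pi}|I_+(f)|^2 .$$
The only delicate point is the boundary behaviour: the limit at $+\infty$ and the existence and identification of $\hat f(0^+)$. Both are handled by the $H^1(0,\infty)$ regularity of $\hat f$.
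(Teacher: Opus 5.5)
Your proof is correct and is the paper's argument: Plancherel, taking the imaginary part to obtain $\frac{1}{4\pi}\int_0^\infty \frac{d}{d\xi}|\hat f|^2\,d\xi$, and evaluating the boundary term at $0^+$. Your extra justifications, namely the vanishing of $|\hat f|^2$ at $+\infty$ from $H^1(0,\infty)$ regularity and the identification $I_+(f)=\hat f(0^+)$ via the approximate identity $\hat\chi_\varepsilon$, simply fill in details the paper leaves implicit.
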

 \begin{proof}
 By Plancherel formula, 
 $$ \langle X^*f,f\rangle= \frac 1{2\pi}\langle \widehat{X^*f},\hat f\rangle=\frac 1{2\pi}\langle i\frac d{d\xi}\hat f,\hat f\rangle.$$
 Hence, taking the imaginary part gives
$$ \Im \langle X^*f,f\rangle=\frac 1{2\pi}\Re \langle \frac d{d\xi}\hat f,\hat f\rangle=\frac 1{4\pi}\int_0^\infty\frac d{d\xi}|\hat f|^2(\xi) d\xi=-\frac 1{4\pi}|\hat f(0^+)|^2.$$

 \end{proof}
We need another auxiliary lemma which uses the fundamental property on Hankel operator $H_u$.
\begin{equation}\label{SstarH}
S^*(\eta)H_u=H_u S(\eta),\; \eta>0.
\end{equation}
  \begin{lemma}\label{Commutator}
  Let $u\in H^{\frac 12}_+(\R)$. Let $f\in L^2_+(\R)$ such that $\hat f$ is continuous in a neighborhood of $0^+$ (or in other words, $f\in {\rm Dom}I_+$). Then,  
  $$\left[\frac{S(\eta)^*}\eta,H_u^2\right ]f\td_\eta,0\frac 1{2\pi}\left(I_+(f)H_u(u)-\langle f,u\rangle u\right)\; \text{ in }L^2_+(\R).$$
  \end{lemma}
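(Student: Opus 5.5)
The plan is to split the commutator using the intertwining relation \eqref{SstarH}. Since $S^*(\eta)H_u=H_uS(\eta)$, we have
$$S^*(\eta)H_u^2=H_uS(\eta)H_u .$$
Writing $S(\eta)=S^*(\eta)+(S(\eta)-S^*(\eta))$ and applying \eqref{SstarH} once more gives $H_uS(\eta)H_u=H_u^2S(\eta)$. Hence
$$\Big[\tfrac{S^*(\eta)}{\eta},H_u^2\Big]f=\frac 1\eta\Big(H_u^2S(\eta)f-H_u^2S^*(\eta)f\Big)=H_u^2\Big(\frac{S(\eta)-S^*(\eta)}{\eta}\Big)f .$$
Here $S^*(\eta)$ is understood as the adjoint of $S(\eta)$ on $L^2_+$, namely $S^*(\eta)f=\Pi({\rm e}^{-i\eta x}f)$; this is the meaning under which \eqref{SstarH} holds. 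So it suffices to identify the limit of $H_u^2\big(\tfrac{S(\eta)-S^*(\eta)}{\eta}\big)f$.

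I will do this on the Fourier side. For $\xi>0$ we have $\widehat{S(\eta)f}(\xi)=\hat f(\xi-\eta)\mathbb 1_{\xi>\eta}$ and $\widehat{S^*(\eta)f}(\xi)=\hat f(\xi+\eta)$. Since $H_u^2$ is complex-linear, I will write
$$H_u^2\Big(\frac{S(\eta)-S^*(\eta)}{\eta}f\Big)=H_u\Big(H_u\Big(\frac{S(\eta)f-S^*(\eta)f}{\eta}\Big)\Big)$$
and first compute the inner Hankel operator, whose antilinearity is harmless here. Using \eqref{SstarH} again, $H_uS(\eta)f=S^*(\eta)H_uf$, and $H_uS^*(\eta)f=\Pi(u\,{\rm e}^{i\eta x}\bar f)$. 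A cleaner route is to write directly
$$\widehat{H_u g}(\xi)=\frac1{2\pi}\int_0^\infty \hat u(\xi+\zeta)\overline{\hat g(\zeta)}\,d\zeta$$
with $g=\tfrac{S(\eta)f-S^*(\eta)f}{\eta}$. After the change of variables $\zeta\mapsto\zeta+\eta$ in the $S(\eta)$ part, the two pieces become
$$\frac1{2\pi\eta}\int_0^\infty\big(\hat u(\xi+\zeta+\eta)-\hat u(\xi+\zeta-\eta)\mathbb 1_{\zeta>\eta}\big)\overline{\hat f(\zeta)}\,d\zeta-\frac1{2\pi\eta}\int_0^{\eta}\hat u(\xi+\zeta-\eta)\,\overline{\hat f(\zeta)}\,d\zeta$$
(up to the bookkeeping of signs).

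The key observation is that the boundary piece $\tfrac1\eta\int_0^\eta$ produces the term $I_+(f)$: by continuity of $\hat f$ near $0^+$, it behaves like $\overline{\hat f(0^+)}\,\hat u(\xi)$. After applying the outer $H_u$, which conjugates this scalar, we get $\tfrac1{2\pi}I_+(f)H_u(u)$. This is exactly where the hypothesis $f\in{\rm Dom}\,I_+$ enters.

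The interior difference quotient, on the other hand, is best handled without differentiating $\hat u$, since $u\in H^{1/2}$ only. I will instead move the difference onto the other factor using $\langle\cdot,\cdot\rangle$ duality. Pair $H_u^2 g$ against a test function $h$, and write $\langle H_u^2 g,h\rangle=\langle H_u h, H_u g\rangle$ together with the antilinear symmetry $\langle H_u a,b\rangle=\langle H_u b,a\rangle$. This gives
$$\langle H_u^2 g,h\rangle=\langle g, H_u^2h\rangle ,$$
so the limit is governed by $\langle g, H_u^2h\rangle=\tfrac1\eta\langle f,(S^*(\eta)-S(\eta))H_u^2h\rangle$. For $H_u^2h$ in the range, using $H_u^2h=H_u(H_uh)$ and \eqref{SstarH}, the quotient $\tfrac{(S^*(\eta)-S(\eta))}{\eta}H_u k$ can be rewritten in terms of $\tfrac{H_u(S(\eta)-S^*(\eta))k}{\eta}$. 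Its limit is identified via the formula $\tfrac{S^*(\eta)-{\rm Id}}{\eta}\to -iX^*$ together with the Cauchy-type identity $X^*H_uk-H_uXk=\tfrac{1}{2i\pi}\overline{I_+(k)}\,u$-type commutator (proved similarly on the Fourier side). This produces the rank-one term $-\tfrac1{2\pi}\langle f,u\rangle u$.

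The last step upgrades weak convergence to convergence in $L^2_+$ norm. Since $H_u$ is Hilbert--Schmidt, I will first prove the statement for $u$ in a dense class, say rational or Schwartz functions with $\hat u$ smooth, where all computations are justified and the convergence is strong. Then I extend by a uniform bound: $\big\|[\tfrac{S^*(\eta)}\eta,H_u^2]f\big\|$ is controlled by $C(f)\|u\|_{H^{1/2}}^2$ uniformly in $\eta$. I expect this uniform estimate to be the main obstacle: it must use the $\dot H^{1/2}$ structure through \eqref{TrH2} (for instance, bounding $\tfrac{\|H_u(S(\eta)-1)\|_{HS}}{\sqrt\eta}$ by a tail of $\int\xi|\hat u|^2$), and to obtain it I would try to split the integral at $\xi\sim\eta$.
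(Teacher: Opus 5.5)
There is a genuine gap in your first step. From \eqref{SstarH} you only get $H_uS^*(\eta)H_u=H_u^2S(\eta)$, hence
\[
H_uS(\eta)H_u=H_u^2S(\eta)+H_u\bigl(S(\eta)-S^*(\eta)\bigr)H_u ,
\]
and you dropped the last term. It does not vanish. The identity $S^*(\eta)H_u^2=H_u^2S(\eta)$ would mean that $H_u^2S(\eta)$ is self-adjoint. Since $\langle H_u^2S(\eta)f,h\rangle=\langle S(\eta)H_uh,H_uf\rangle$, this amounts to $\langle S(\eta)a,b\rangle=\langle a,S(\eta)b\rangle$ for all $a,b\in\mathrm{Ran}\,H_u$. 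Already for $u=\frac{1}{x+1+i}$, where $\mathrm{Ran}\,H_u=\C u$, one computes $\langle S(\eta)u,u\rangle=\pi{\rm e}^{-(1+i)\eta}\notin\R$.

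The failure is also visible in the limit. For $f\in{\rm Dom}\,X$ one has $I_+(f)=0$ and $\frac1\eta(S(\eta)-S^*(\eta))f\to 2iXf$ in $L^2_+$. So your expression converges to $2iH_u^2Xf$, which in general is not $-\frac1{2\pi}\langle f,u\rangle u$. If $\mathrm{Ran}\,H_u$ has dimension at least $2$, these vectors are not even all multiples of $u$. The central difference quotient $\hat u(\xi+\zeta+\eta)-\hat u(\xi+\zeta-\eta)$ that you then try to handle by duality comes only from the dropped term. No duality argument will turn it into the claimed rank-one limit.

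The fix is to use \eqref{SstarH} only once, on the left factor, $S^*(\eta)H_u^2=H_uS(\eta)H_u$. This gives
\[
\Bigl[\frac{S^*(\eta)}\eta,H_u^2\Bigr]=H_u\Bigl(\frac{S(\eta)}\eta H_u-H_u\frac{S^*(\eta)}\eta\Bigr),
\]
which is the decomposition the paper uses. Its point is that the inner operator contains no difference quotient of $\hat u$ at all. Indeed, $\widehat{S(\eta)H_uf}(\xi)$ (for $\xi>\eta$) and $\widehat{H_uS^*(\eta)f}(\xi)$ are integrals of the same kernel $\frac1{2\pi}\hat u(\xi+\zeta-\eta)\overline{\hat f(\zeta)}$, over $\zeta>0$ and $\zeta>\eta$ respectively. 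After division by $\eta$ only two boundary pieces remain:
\[
-\frac1{2\pi\eta}\int_0^\infty\hat u(\xi+\zeta-\eta)\overline{\hat f(\zeta)}\,d\zeta \quad\text{restricted to } 0<\xi<\eta, \qquad \frac1{2\pi\eta}\int_0^\eta\hat u(\xi+\zeta-\eta)\overline{\hat f(\zeta)}\,d\zeta .
\]
The first lives exactly where $S(\eta)H_uf$ vanishes. After the outer $H_u$, it tends in $L^2_+$ to $-\frac1{2\pi}\langle f,u\rangle u$. The second, using continuity of $\hat f$ at $0^+$, tends to $\frac1{2\pi}I_+(f)H_u(u)$. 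Both limits follow simply from $L^2$-continuity of translations.

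So the convergence comes directly in norm. The density argument and the uniform $H^{1/2}$ bound, which you single out as the main obstacle and leave unproved, are not needed.
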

  
  \begin{proof}
 We write 

 $$\left[\frac{S(\eta)^*}\eta,H_u^2\right] =\left[\frac{S(\eta)^*}\eta,H_u\right]H_u+H_u \left[\frac{S(\eta)^*}\eta,H_u\right ]$$
 and 
$$ \left[\frac{S(\eta)^*}\eta,H_u\right] =\frac{S(\eta)^*}\eta H_u-H_u\frac{S(\eta)^*}\eta= H_u\frac{S(\eta)}\eta-H_u\frac{S(\eta)^*}\eta$$
so that, using both identities, we get
 \begin{equation}\label{1}\left[\frac{S(\eta)^*}\eta,H_u^2\right] =H_u\left(\frac{S(\eta)}\eta H_u-H_u \frac{S(\eta)^*}\eta\right )\end{equation}
 Let $g_1:=\frac{S(\eta)}\eta H_uf$ and $g_2:=H_u \frac{S(\eta)^*}\eta f$ and compute $\hat g_1-\hat g_2$. For $\xi>0$, 
  \begin{eqnarray*}\widehat{g_2}(\xi)&=&\frac 1{2\pi \eta} \int_0^\infty \hat u(\xi+\zeta)\overline{\hat f(\eta+\zeta)}d\zeta\\
  &=&\frac 1{2\pi \eta} \int_\eta^\infty \hat u(\xi+\zeta-\eta)\overline{\hat f(\zeta)}d\zeta\\
  &=&\frac 1{2\pi \eta} \int_0^\infty \dots-\frac 1{2\pi \eta} \int_0^\eta \dots.
   \end{eqnarray*}
   Observe that, if $\xi\ge \eta$, 
   $$\frac 1{2\pi \eta} \int_0^\infty \hat u(\xi+\zeta-\eta)\overline{\hat f(\zeta)}d\zeta=\frac{1}{\eta}\widehat{H_uf}(\xi-\eta)=\hat g_1(\xi),$$
 so that 
 \begin{eqnarray*}
  \hat g_2(\xi)&=&-\frac 1{2\pi \eta} \int_0^\eta \hat u(\xi+\zeta-\eta)\overline{\hat f(\zeta)}d\zeta\\
  &+&\left\{\begin{array}{ll}\hat g_1(\xi)&\text{ if }\xi\ge \eta\\
\displaystyle \frac 1{2\pi \eta} \int_0^\infty \hat u(\xi+\zeta-\eta)\overline{\hat f(\zeta)}d\zeta &\text{ if } 0<\xi<\eta.
 \end{array}\right.
     \end{eqnarray*}

It gives
   \begin{eqnarray*}
  \hat g_1(\xi)-\hat g_2(\xi)&=&-\frac 1{2\pi \eta} \int_0^\infty \hat u(\xi+\zeta-\eta)\overline{\hat f(\zeta)}d\zeta\mathds{1}_{[ 0,\eta]}(\xi)\\
   &+&\frac 1{2\pi \eta} \int_0^\eta \hat u(\xi+\zeta-\eta)\overline{\hat f(\zeta)}d\zeta\mathds 1_{[0,\infty]}(\xi)\\
  \end{eqnarray*}

  Applying $H_u$  gives
   \begin{eqnarray*}
  \widehat{H_u\frac{S(\eta)}\eta H_uf}-\widehat{H_u^2\frac{S(\eta)^*}\eta} f&=& -\frac 1{2\pi}\int_0^\infty \hat u(\cdot+\xi) \frac 1{2\pi\eta}{\mathds 1}_{[0,\eta]}(\xi) \int_0^\infty \overline{\hat u(\xi+\zeta-\eta)}\hat f(\zeta)d\zeta d\xi\\
  &+&\frac 1{2\pi}\int_0^\infty \hat u(\cdot+\xi) \frac 1{2\pi\eta} \int_0^\eta\overline{ \hat u(\xi+\zeta-\eta)}\hat f(\zeta)d\zeta d\xi.
  \end{eqnarray*}
The first term equals
$$\frac {-1}{(2\pi)^2 \eta}\int_0^\eta \hat u(\cdot+\xi) \left( \int_0^\infty \overline{ \hat u(\xi+\zeta-\eta)}{\hat f(\zeta)}d\zeta\right)d\xi=\frac {-1}{(2\pi)^2 \eta}\int_0^\eta \hat u(\cdot+\xi)\langle \hat f,\hat u(\xi-\eta+\cdot)\rangle d\xi.$$  As translations are continuous on $L^2$, this term tends to $-\frac 1{2\pi} \hat u (\cdot)\langle f,u\rangle$ in $L^2_+$ as $\eta\to 0$ so that the corresponding term in $  \left[\frac{S(\eta)^*}\eta,H_u^2\right] f$ tends to $-\langle f,u\rangle \frac u{2\pi}.$
 The same argument allows to get that the second term tends in $L^2_+$ to $$\frac 1{2\pi}\int_0^\infty \hat u(\cdot+\xi)\left(\frac 1{2\pi}\overline{\hat u(\xi)}\hat f(0^+)\right)d\xi=\frac 1{2\pi}\hat f(0^+)\widehat{H_u(u)}$$ as $\eta$ tends to $0$. 
It completes the proof.
  \end{proof}
  As an immediate corollary, we get the following result
  \begin{corollary}\label{CommutatorXstar}
   Let $u\in H^{\frac 12}_+(\R)$. Let $f\in  {\rm Dom}X^*$. Then,  
  $$\left[X^*,H_u^2\right ]f=\frac i{2\pi}\left(I_+(f)H_u(u)-\langle f,u\rangle u\right)\; \text{ in }L^2_+(\R).$$
  \end{corollary}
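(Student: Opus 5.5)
The plan is to derive the identity from Lemma \ref{Commutator} by passing to the generator of the semigroup $S^*(\eta)$. The point is that $-iX^*$ is the infinitesimal generator of $(S^*(\eta))_{\eta>0}$. Since $\frac{S^*(\eta)}{\eta}$ and $\frac{S^*(\eta)-{\rm Id}}{\eta}$ differ by a multiple of the identity, which commutes with $H_u^2$, we have
$$\left[\frac{S(\eta)^*}\eta,H_u^2\right] f=\left[\frac{S(\eta)^*-{\rm Id}}\eta,H_u^2\right] f .$$

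First I check that the hypothesis of Lemma \ref{Commutator} holds. If $f\in{\rm Dom}X^*$, then $\frac{d}{d\xi}\hat f\in L^2(0,\infty)$, so $\hat f$ is absolutely continuous up to $0^+$. Hence $I_+(f)=\hat f(0^+)$ exists, and Lemma \ref{Commutator} applies. Writing
$$\frac{S^*(\eta)-{\rm Id}}{\eta}\,H_u^2 f=\left[\frac{S(\eta)^*}\eta,H_u^2\right] f+H_u^2\,\frac{S^*(\eta)-{\rm Id}}{\eta}f,$$
I note two convergences. The first term on the right tends to $\frac1{2\pi}\left(I_+(f)H_u(u)-\langle f,u\rangle u\right)$ by Lemma \ref{Commutator}. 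For the second term, $\frac{S^*(\eta)f-f}{\eta}\to -iX^*f$ in $L^2_+$ because $f\in{\rm Dom}X^*$, and $H_u^2$ is bounded, so the second term tends to $-iH_u^2X^*f$.

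Consequently the difference quotient $\frac{S^*(\eta)g-g}{\eta}$ with $g:=H_u^2f$ converges in $L^2_+$ as $\eta\to 0$. The key step is to conclude from this that $g\in{\rm Dom}X^*$ and that the limit equals $-iX^*g$. I expect this to be the only delicate point. It is the standard fact that the generator of a $C_0$ contraction semigroup has as its domain exactly the vectors whose difference quotients converge.

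To keep the argument self-contained, I would verify this directly on the Fourier side. There $\widehat{S^*(\eta)g}(\xi)=\hat g(\xi+\eta)$ for $\xi>0$. So $L^2$ convergence of $\frac{\hat g(\cdot+\eta)-\hat g}{\eta}$ to some $h$ gives, after testing against $C_c^\infty(0,\infty)$ functions, that the distributional derivative of $\hat g$ on $(0,\infty)$ is $h\in L^2$. By the Fourier description of ${\rm Dom}X^*$, this means $g\in{\rm Dom}X^*$, and also $-iX^*g$ equals the limit.

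Identifying the limits then gives
$$-iX^*H_u^2f=\frac1{2\pi}\left(I_+(f)H_u(u)-\langle f,u\rangle u\right)-iH_u^2X^*f .$$
Multiplying by $i$ yields $[X^*,H_u^2]f=\frac i{2\pi}\left(I_+(f)H_u(u)-\langle f,u\rangle u\right)$. As a by-product, $H_u^2$ maps ${\rm Dom}X^*$ into itself.
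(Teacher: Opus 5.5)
Your proposal is correct and follows the same route as the paper, which states this as an immediate corollary of Lemma \ref{Commutator}: you let $\eta\to 0$ in that lemma, using that $-iX^*$ generates $S^*(\eta)$. Your Fourier-side check that $H_u^2f\in{\rm Dom}X^*$, with the difference-quotient limit equal to $-iX^*H_u^2f$, supplies exactly the detail the paper leaves implicit.
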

  The following lemma can be compared to similar ones in \cite{GaGM26}, \cite{GaG26} and \cite{GH26}. 
  \begin{lemma}\label{EigenfunctionDomXStar}
  Let $u\in H^{\frac 12}_+(\R)$.The eigenfunctions associated to non zero eigenvalues of $H_u^2$ belong to the domain of $X^*$. Moreover, if $\varphi$ is an eigenfunction with $H_u^2(\varphi)=\sigma^2\varphi$, $\sigma\neq 0$,
  $$I_+(\varphi)=\frac 1{\sigma^2}\langle u,H_u(\varphi)\rangle$$ 
  and
  $$(\sigma^2-H_u^2)X^*(\varphi)= \frac{i}{2\pi}I_+(\varphi )H_uu-\frac i{2\pi}\langle \varphi,u\rangle u.$$
  \end{lemma}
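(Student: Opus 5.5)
Since $\sigma\neq0$, we can write $\varphi=\sigma^{-2}H_u^2\varphi=\sigma^{-2}H_u(\psi)$ with $\psi:=H_u\varphi\in L^2_+$. The plan is to show that functions in the range of $H_u$ have a Fourier transform continuous at $0^+$, so that $\varphi\in{\rm Dom}\,I_+$. From $\widehat{H_u\psi}(\xi)=\frac1{2\pi}\int_0^\infty\hat u(\xi+\eta)\overline{\hat\psi(\eta)}\,d\eta$ and the continuity of translations in $L^2$, $\xi\mapsto\widehat{H_u\psi}(\xi)$ is continuous on $[0,\infty)$. Its value at $0$ is $\frac1{2\pi}\int_0^\infty\hat u\,\overline{\hat\psi}=\langle u,\psi\rangle$. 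This gives the formula $I_+(\varphi)=\sigma^{-2}\langle u,H_u\varphi\rangle$.

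Next I use the semigroup to get the domain of $X^*$. Set $T_\eta:=\frac{S(\eta)^*-{\rm Id}}{\eta}$, which is bounded for each $\eta>0$. Since $[{\rm Id},H_u^2]=0$, Lemma \ref{Commutator} applied to $f=\varphi$ gives
$$(\sigma^2-H_u^2)\,T_\eta\varphi=[T_\eta,H_u^2]\varphi\td_\eta,0 \frac1{2\pi}\bigl(I_+(\varphi)H_uu-\langle\varphi,u\rangle u\bigr)=:F.$$
Let $P$ be the orthogonal projector onto $\ker(\sigma^2-H_u^2)$, which is finite dimensional because $H_u^2$ is trace class. On the range of ${\rm Id}-P$ the operator $\sigma^2-H_u^2$ has a bounded inverse. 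Hence $({\rm Id}-P)T_\eta\varphi$ converges in $L^2_+$.

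The main obstacle is controlling $PT_\eta\varphi$, i.e. the scalars $\langle T_\eta\varphi,\varphi_k\rangle=\langle\varphi,T_\eta^*\varphi_k\rangle$ for eigenfunctions $\varphi_k$ with eigenvalue $\sigma^2$. I will handle these on the Fourier side. We have $\widehat{T_\eta\varphi}(\xi)=\frac{\hat\varphi(\xi+\eta)-\hat\varphi(\xi)}{\eta}$, so
$$\langle T_\eta\varphi,\varphi_k\rangle=\frac1{2\pi\eta}\Bigl(\int_\eta^\infty\hat\varphi(\zeta)\overline{\hat\varphi_k(\zeta-\eta)}\,d\zeta-\int_0^\infty\hat\varphi\,\overline{\hat\varphi_k}\Bigr).$$
This is a difference quotient of $\eta\mapsto\langle \hat\varphi,\hat\varphi_k(\cdot-\eta)\rangle$, plus a boundary term controlled by the continuity of $\hat\varphi,\hat\varphi_k$ at $0$. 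It seems delicate to make it converge directly. Instead I will show that the family $T_\eta\varphi$ is bounded in $L^2$; weak compactness then suffices.

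To get boundedness, pair the relation above with $T_\eta\varphi$ itself and take imaginary parts, as in Lemma \ref{Xstarff}. Alternatively, since $P T_\eta\varphi$ lies in a finite-dimensional space, it is enough to bound $\langle T_\eta\varphi,\varphi_k\rangle$. Writing $\varphi_k=\sigma^{-2}H_u^2\varphi_k$ and using \eqref{SstarH}, $S^*(\eta)H_u=H_uS(\eta)$, I can move $T_\eta$ across $H_u^2$. The error term is exactly the commutator from Lemma \ref{Commutator}, which is bounded. This yields a bound on $\sigma^2\langle T_\eta\varphi,\varphi_k\rangle-\langle T_\eta\varphi,H_u^2\varphi_k\rangle$, but that combination vanishes identically, so I instead use $T_\eta\varphi_k$ relations symmetrically. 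If this fails, the fallback is a direct Fourier estimate: from $\hat\varphi=\sigma^{-2}\widehat{H_u\psi}$, differentiate under the integral using $\xi\hat u\in L^2$ (which holds as $u\in H^{1/2}$ only on average), giving $\frac{d}{d\xi}\hat\varphi\in L^2$.

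Once $T_\eta\varphi$ is bounded, take a weak limit $g$ along a subsequence. For $h\in{\rm Dom}\,X$ we have $\langle T_\eta\varphi,h\rangle\to\langle\varphi,-i\cdot iXh\rangle$, so $\langle -iX^*$-type identity holds. This gives $\varphi\in{\rm Dom}\,X^*$ with $-iX^*\varphi=g$. Finally, Corollary \ref{CommutatorXstar} gives $(\sigma^2-H_u^2)X^*\varphi=[X^*,H_u^2]\varphi=\frac i{2\pi}(I_+(\varphi)H_uu-\langle\varphi,u\rangle u)$, which is the stated identity.
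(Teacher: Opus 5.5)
There is a genuine gap, and it is exactly at the step you flag as the main obstacle: you never obtain any control of $PT_\eta\varphi$, the component of the difference quotient in $\ker(\sigma^2-H_u^2)$. Your proof that $\varphi\in\mathrm{Dom}\,I_+$, with the formula for $I_+(\varphi)$, is the paper's argument, and your convergence of $(\mathrm{Id}-P)T_\eta\varphi$ is correct. None of the routes you sketch for the kernel component works, however.

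\begin{itemize}
\item Pairing $(\sigma^2-H_u^2)T_\eta\varphi=[T_\eta,H_u^2]\varphi$ with $T_\eta\varphi$ cannot help. The quadratic form of $\sigma^2-H_u^2$ vanishes on its kernel, so it only sees $(\mathrm{Id}-P)T_\eta\varphi$.
\item A Lemma \ref{Xstarff}-type computation with $\varphi$ only gives $2\,\mathrm{Re}\langle T_\eta\varphi,\varphi\rangle=-\eta\Vert T_\eta\varphi\Vert^2-\frac{1}{2\pi\eta}\int_0^\eta|\hat\varphi|^2$. This yields no uniform bound.
\item Moving $T_\eta$ across $H_u^2$ against $\varphi_k$ gives only $\langle[T_\eta,H_u^2]\varphi,\varphi_k\rangle=0$, as you noticed yourself. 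That is a compatibility condition on the limit, not a bound.
\item The Fourier fallback is wrong. $u\in H^{1/2}_+$ gives $\xi^{1/2}\hat u\in L^2$, not $\xi\hat u\in L^2$. In any case, differentiating $\xi\mapsto\int_0^\infty\hat u(\xi+\eta)\overline{\hat\psi(\eta)}\,d\eta$ needs either $\frac{d}{d\xi}\hat u\in L^2$ (decay of $u$, not assumed) or, after integrating by parts, $\frac{d}{d\xi}\hat\psi\in L^2$. The latter is circular, because $\psi=H_u\varphi$ lies in the same eigenspace as $\varphi$.
\end{itemize}
So the boundedness of $T_\eta\varphi$, on which your weak-compactness conclusion rests, is unproved.

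The missing idea is to avoid inverting $\sigma^2-H_u^2$ altogether. The paper chooses a rational $u_\varepsilon$ with $\Vert H_u^2-H_{u_\varepsilon}^2\Vert<\sigma^2$. It then sets $T_\varepsilon=\mathrm{Id}-\sigma^{-2}(H_u^2-H_{u_\varepsilon}^2)$, which is invertible by a Neumann series and satisfies $T_\varepsilon\varphi=\sigma^{-2}H_{u_\varepsilon}^2\varphi$. With $\Psi_\eta=T_\eta\varphi$ one has
\[
T_\varepsilon\Psi_\eta=\frac{S(\eta)^*-\mathrm{Id}}{\eta}\,\sigma^{-2}H_{u_\varepsilon}^2\varphi-\Bigl[\frac{S(\eta)^*}{\eta},T_\varepsilon\Bigr]\varphi .
\]
The first term converges because $H_{u_\varepsilon}^2\varphi$ is a rational function in $L^2_+$, hence lies in $\mathrm{Dom}\,X^*$. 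The second converges by Lemma \ref{Commutator} applied to both $u$ and $u_\varepsilon$, using $\varphi\in\mathrm{Dom}\,I_+$. Hence $\Psi_\eta=T_\varepsilon^{-1}(T_\varepsilon\Psi_\eta)$ converges strongly, including its kernel component. The small finite-rank perturbation absorbs precisely the eigenspace that blocks your splitting. Once $\varphi\in\mathrm{Dom}\,X^*$ is established, your final step via Corollary \ref{CommutatorXstar} (equivalently, passing to the limit in $(\sigma^2-H_u^2)\Psi_\eta$) correctly gives the stated identity.
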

  \begin{proof}
  We first prove that $\varphi$ belongs to the domain of $I_+$. We write, 
  $$\hat \varphi(\xi)=\frac 1{\sigma^2} \widehat{H_u^2(\varphi)}(\xi)=\frac 1{2\pi \sigma^2}\int_0^\infty \hat u(\xi+\eta)\overline{\widehat{H_u(\varphi)}(\eta)}d\eta.$$ As the right hand side has a pointwise limit as $\xi\to 0$, we get the result with
  $$I_+(\varphi)=\frac 1{\sigma^2}\langle u,H_u(\varphi)\rangle$$ as expected.\\
  We turn to the proof of the second part of the lemma. We have to prove that $$\Psi_\eta:=\frac{ S(\eta)^*\varphi-\varphi}{\eta}$$ has a limit as $\eta$ tends to $0^+$. We first choose $u_\varepsilon$ a rational function such that $\Vert u-u_\varepsilon\Vert_{H^{\frac 12}}<\varepsilon$ with $\varepsilon$ sufficiently small so that 
  $\Vert H_u^2-H_{u_\varepsilon}^2\Vert_{\mathcal L(L^2)}<\sigma^2$. We consider $$T_\varepsilon :=\mathrm{Id}-\frac{(H_u^2-H_{u_\varepsilon}^2)}{\sigma^2}$$ which is an invertible operator with this choice of $\varepsilon$. One has 
  $$T_\varepsilon(\varphi)=\varphi-\frac{(H_u^2-H_{u_\varepsilon}^2)}{\sigma^2}\varphi=\frac 1{\sigma^2}H_{u_\varepsilon}^2\varphi.$$ 
  Hence 
  $$T_\varepsilon\Psi_\eta=\frac {S(\eta)^*- \mathrm{Id}}\eta\frac 1{\sigma^2}H_{u_\varepsilon}^2\varphi-\left[\frac{S(\eta)^*}\eta, T_\varepsilon\right]\varphi.$$
  As $H_{u_\varepsilon}^2(\varphi)$ is a rational function in $L^2_+(\R)$, it belongs to the domain of $X^*$ and $$\frac {S(\eta)^*- \mathrm{Id}}\eta\frac 1{\sigma^2}H_{u_\varepsilon}^2\varphi$$ has a limit as $\eta\to 0$. It remains to consider the term $\left[\frac{S(\eta)^*}\eta, T_\varepsilon\right]\varphi$.
  As $$\left[\frac{S(\eta)^*}\eta, T_\varepsilon\right]\varphi=\frac 1{\sigma^2}\left[\frac{S(\eta)^*}\eta, H_{u_\varepsilon}^2\right]\varphi-\frac 1{\sigma^2}\left[\frac{S(\eta)^*}\eta, H_u^2\right]\varphi$$ we use Lemma \ref{Commutator} to obtain the existence of the limit as $\eta\to 0$.\\
To identify $(\sigma^2-H_u^2)X^*\varphi$, we compute
$$(\sigma^2-H_u^2)\Psi_\eta=\left[\frac{S(\eta)^*}\eta,H_u^2\right]\varphi.$$
Another use of Lemma \ref{Commutator} gives 
$$(\sigma^2-H_u^2)\Psi_\eta\td_\eta,0\frac 1{2\pi}\left(I_+(\varphi)H_u(u)-\langle \varphi,u\rangle u\right)$$ which equals $(\sigma^2-H_u^2)(-iX^*)(\varphi)$ and completes the proof.

  \end{proof}
As a corollary, we recover a result established in \cite{GPJEMS} in the special case of rational functions.
\begin{corollary}\label{notperp}
Let $u\in H^{\frac 12}_+(\R)$. For any non--zero eigenvalue $\sigma^2$ of $H_u^2$, $u$ is not orthogonal to $\ker(H_u^2-\sigma^2)$.
\end{corollary}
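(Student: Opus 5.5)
We argue by contradiction, using Lemma \ref{EigenfunctionDomXStar} together with Lemma \ref{Xstarff}. Suppose that $u\perp\ker(H_u^2-\sigma^2)$ for some $\sigma\neq 0$, and let $\varphi$ be a non-zero eigenfunction with $H_u^2\varphi=\sigma^2\varphi$.

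The first step is to show that $I_+(\varphi)=0$. By Lemma \ref{EigenfunctionDomXStar}, $I_+(\varphi)=\sigma^{-2}\langle u,H_u\varphi\rangle$. Since $H_u$ commutes with $H_u^2$, the space $\ker(H_u^2-\sigma^2)$ is invariant under $H_u$, so $H_u\varphi$ lies in it. The orthogonality assumption then gives $\langle u,H_u\varphi\rangle=0$, hence $I_+(\varphi)=0$. The same assumption gives $\langle\varphi,u\rangle=0$.

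The second step uses the identity from Lemma \ref{EigenfunctionDomXStar}. With both coefficients vanishing, it reads $(\sigma^2-H_u^2)X^*\varphi=0$, so $X^*\varphi\in\ker(H_u^2-\sigma^2)$. Because $H_u$ is compact and $\sigma\ne0$, this eigenspace $E$ is finite dimensional. It is also contained in $\mathrm{Dom}\,X^*$ by Lemma \ref{EigenfunctionDomXStar}. Every $\psi\in E$ satisfies the same hypotheses as $\varphi$, so $X^*$ maps $E$ into itself. Moreover, by the first step, $I_+$ vanishes on $E$.

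The third step concludes. Since $X^*|_E$ is an operator on a non-zero finite dimensional space, it has an eigenvector $\psi\in E$, $\psi\ne0$, with $X^*\psi=\lambda\psi$ for some $\lambda\in\C$. There are two ways to rule this out.

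The first is directly on the Fourier side. The equation reads $i\frac{d}{d\xi}\hat\psi=\lambda\hat\psi$ on $(0,\infty)$, so $\hat\psi(\xi)=\hat\psi(0^+)e^{-i\lambda\xi}$. Since $I_+(\psi)=\hat\psi(0^+)=0$, this forces $\psi=0$, a contradiction.

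The second uses Lemma \ref{Xstarff}. It gives $\Im\lambda\,\|\psi\|^2=-\frac1{4\pi}|I_+(\psi)|^2=0$, so $\lambda$ is real. Then $\hat\psi(\xi)=c\,e^{-i\lambda\xi}$ has constant modulus on $(0,\infty)$, and it lies in $L^2$ only if $c=0$.

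Either way we reach a contradiction, which proves the corollary. The only point requiring care is the invariance $X^*E\subset E$, which is needed to obtain an eigenvector; it follows as described because the hypotheses of Lemma \ref{EigenfunctionDomXStar} apply to every element of $E$.
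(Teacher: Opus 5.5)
Your proof is correct. Its first two steps coincide with the paper's: $I_+$ and $\langle\cdot,u\rangle$ vanish on $E=\ker(H_u^2-\sigma^2)$, so Lemma~\ref{EigenfunctionDomXStar} gives $X^*E\subset E$.

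The difference is only in the final step. The paper deduces from $X^*E\subset E$ that $S(\eta)^*E\subset E$, and then reads off $\hat\varphi(\eta)=I_+(S(\eta)^*\varphi)=0$ for every $\eta\ge 0$. You instead use $\dim E<\infty$ to extract an eigenvector $\psi$ of $X^*|_E$. You then solve $i\hat\psi'=\lambda\hat\psi$ explicitly, so that $\hat\psi(0^+)=I_+(\psi)=0$ forces $\psi=0$. Your alternative ending via Lemma~\ref{Xstarff} is also valid.

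Your version is somewhat more self-contained. The paper states the passage from invariance under the generator $-iX^*$ to invariance under the semigroup $S(\eta)^*$ without justification. That passage is most naturally justified by the same finite dimensionality you invoke: $S(\eta)^*|_E=\mathrm{e}^{-i\eta X^*|_E}$, by uniqueness for the abstract Cauchy problem. The paper's route, in turn, avoids choosing an eigenvector and reads off the vanishing of $\hat\varphi$ directly along the translation semigroup.
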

\begin{proof}
Let $\sigma$ be such that $u\perp \ker(H_u^2-\sigma^2)$. We prove that $\ker(H_u^2-\sigma^2)=\{0\}$. We first prove that $I_+(\varphi)=0$ for any $\varphi\in \ker(H_u^2-\sigma^2)$. 
By  Lemma \ref{EigenfunctionDomXStar}, 
$$I_+(\varphi)=\frac 1{\sigma^2} \langle u,H_u(\varphi)\rangle=0,$$ {since $H_u(\varphi)\in \ker(H_u^2-\sigma^2)$}.
On the other hand, by  Lemma \ref{EigenfunctionDomXStar}, 
$$(\sigma^2-H_u^2)X^*\varphi= \frac i{2\pi}\left(I_+(\varphi)H_u(u)-\langle \varphi,u\rangle u\right)=0.$$ Hence $X^*\varphi\in \ker(H_u^2-\sigma^2)$ so that $X^*$ acts on $\ker(H_u^2-\sigma^2)$. It implies the same for $S(\eta)^*$ so that $S(\eta)^*\varphi\in \ker(H_u^2-\sigma^2)$. Hence, $\hat\varphi(\eta)=I_+(S(\eta)^*\varphi)=0$ for any $\eta\ge 0$. We conclude that $\varphi=0$ so that $\ker(H_u^2-\sigma^2)=\{0\}$.
\end{proof}

\section{Proof of Theorem \ref{maintheorem}}
In this section, we assume that $u_0$ satisfies the assumption of Theorem \ref{maintheorem}.
In view of Corollary \ref{notperp}, we can also assume that the eigenfunctions $\varphi_j$ are chosen so that $\langle u_0,\varphi_j\rangle$ are positive real numbers. We denote by $$\mathcal M:=\left\{\langle u_0,\varphi_j\rangle,\, j\ge 1\right\}\subset (0,\infty ).$$
For $\mu\in \mathcal M$, we introduce $$J_\mu:=\{j\ge 1;\; \langle u_0,\varphi_j\rangle=\mu\}.$$
Observe that, as $(\langle u_0,\varphi_j\rangle)_j\in\ell^2(\Z_+^*)$, the set $\mathcal M$ is countable --- or finite if $u_0$ is a rational function ---  but, for each $\mu\in\mathcal M$, the set $J_\mu$ is a finite set of integers.  We also consider the orthogonal projector $\mathcal P_\mu$ on $E_\mu:=\mathrm{Span}\{\varphi_j\}_{j\in J_\mu}$ so that $$u_0=\sum_{\mu\in\mathcal M}\mathcal P_\mu(u_0),\quad \mathcal P_\mu(u_0)=\mu \sum_{j\in J_\mu}\varphi_j. $$
The following theorem is a more precise reformulation of Theorem \ref{maintheorem}. {Notice that we have changed slightly the notation, replacing the set $\mathcal V$ of velocities by the set $\mathcal M$. The connection is $c=\mu ^2/2\pi$.}
\begin{theorem}\label{Main2}
Assume $u_0$ in $H^{\frac 12}_+(\R)$ is such that all positive eigenvalues of $H_{u_0}^2$ are simple. With the notation above, for every $\mu \in \mathcal M$, the formula
$$\forall z \in \C_+,  \quad R_\mu ^0(z)=\frac{1}{2i\pi}I_+((\mathcal P_\mu X^*-z)^{-1}\mathcal P_\mu u_0)$$
defines a rational function $R_\mu ^0\in L^2_+(\R)$, with denominator of degree $N_\mu=|J_\mu|$. Denote by $u$ the Szeg\H{o} solution with initial datum  $u_0$, and by  $R_\mu $ the
Szeg\H{o} solution with initial datum $R_\mu ^0$. Then
 $$\lim_{t\to \pm \infty} \left\Vert u(t,\cdot)-\sum_{\mu\ge \e} R_\mu (t,\cdot )\right\Vert_{H^{\frac 12}}\td_\e,{0^+} 0 .$$
 Furthermore,  there exists a smooth function $G_\mu $ on $\T ^{J_\mu}\times \R$, rational in $x$ with denominator of degree $N_\mu$, such that 
 $$R_\mu \left (t,x+t\frac{\mu^2}{2\pi}\right )=G_\mu (t\omega_\mu ,x), $$
 where $\omega _\mu :=(\sigma_j^2)_{j\in J_\mu}$ is the family of eigenvalues of $H_{u_0}^2$ on $E_\mu$.
\end{theorem}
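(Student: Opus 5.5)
I would work entirely in the eigenbasis of $H_{u_0}^2$, writing $\sigma_j^2$ for the eigenvalue of $\varphi_j$. The first step is to compute the matrix of $X^*$ in this basis. Pairing the identity of Lemma \ref{EigenfunctionDomXStar} (applied to $\varphi_j$) with $\varphi_k$ gives, for $\sigma_j\neq\sigma_k$,
$$(\sigma_j^2-\sigma_k^2)\langle X^*\varphi_j,\varphi_k\rangle=\frac{i}{2\pi}\Big(I_+(\varphi_j)\langle H_{u_0}u_0,\varphi_k\rangle-\langle\varphi_j,u_0\rangle\langle u_0,\varphi_k\rangle\Big).$$
I would simplify this using $I_+(\varphi_j)=\sigma_j^{-2}\langle u_0,H_{u_0}\varphi_j\rangle$ and the antilinear action of $H_{u_0}$ on $\varphi_j$. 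The diagonal entries come from Lemma \ref{Xstarff}, namely
$$\Im\langle X^*\varphi_j,\varphi_j\rangle=-\frac{|I_+(\varphi_j)|^2}{4\pi},$$
together with a real part left as a free parameter. Next I would write $L_{u_0}(t)$ in the same basis:
$$\langle L_{u_0}(t)\varphi_j,\varphi_k\rangle=\frac{1}{2\pi}\mu_j\mu_k\,\frac{e^{it(\sigma_j^2-\sigma_k^2)}-1}{i(\sigma_j^2-\sigma_k^2)}\quad(j\ne k),$$
with diagonal entry $t\mu_j^2/2\pi$, where $\mu_j=\langle u_0,\varphi_j\rangle$.

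The key mechanism is that, inside the block $E_\mu$, $X^*+L_{u_0}(t)$ acts as $\mathcal P_\mu X^*\mathcal P_\mu+t\mu^2/2\pi$ plus bounded oscillating terms. Across blocks, the diagonal drifts differ by $t(\mu^2-\mu'^2)/2\pi\to\infty$. Setting $z=x+t\mu^2/2\pi$, I would show that
$$u(t,x+tc_\mu)-\frac{1}{2i\pi}I_+\Big((\mathcal P_\mu X^*+\mathcal P_\mu L_{u_0}(t)\mathcal P_\mu-t c_\mu-z')^{-1}\mathcal P_\mu e^{-itH_{u_0}^2}u_0\Big)\rightharpoonup0 .$$
This would follow from a Schur complement argument: the off-block resolvent is $O(1/t)$ because $\Im(X^*+L)\le0$ (Lemma \ref{Xstarff}) and the diagonal separation grows. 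Tails with small $\mu$ would be controlled uniformly by \eqref{Pointwise} and the bound $\|\Omega_t\|\le1$.

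The finite-dimensional object $g_\mu(t,x)$ obtained this way is quasi-periodic in $t$ with frequencies $\omega_\mu$. It is rational in $x$ with denominator $\det(\mathcal P_\mu X^*\mathcal P_\mu+\cdots-x)$ of degree $N_\mu$, and nonsingular for $x$ real thanks to the imaginary part of the diagonal. At $t=0$ it equals $R_\mu^0$, which proves rationality of $R_\mu^0$. To see that $g_\mu(t,x-c_\mu t)$ is the Szeg\H{o} solution $R_\mu$, I would apply the explicit formula \eqref{Explicit} to the datum $R^0_\mu$. I expect its Lax data to be exactly $\big(\mathcal P_\mu H_{u_0}^2,\ \mathcal P_\mu u_0,\ \mathcal P_\mu X^*\mathcal P_\mu\big)$ up to unitary equivalence, via the inverse spectral correspondence of \cite{GPJEMS}. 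Alternatively, I would use quasi-periodicity: weak limits along times $t_n$ with $t_n\omega\to\theta$ in $\mathbb T^{J_\mu}$ are compatible with the weak continuity of the flow. This identification is a delicate step.

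Finally, I would upgrade weak convergence to strong $H^{1/2}$ convergence through an energy balance. The conserved quantities are $\|u\|_{L^2}^2$ and $\mathrm{Tr}H_u^2=\sum\sigma_j^2$, and the latter equals $\frac{1}{2\pi}\|u\|^2_{\dot H^{1/2}}$ by \eqref{TrH2}. I would check that $\sum_\mu\mathrm{Tr}H_{R_\mu}^2=\sum_j\sigma_j^2$ and $\sum_\mu\|R_\mu^0\|^2=\|u_0\|^2$; the first holds because $H_{R^0_\mu}^2$ has spectrum $\omega_\mu$. The cross terms between different $R_\mu$ vanish as $t\to\pm\infty$, since their velocities differ and they are rational. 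The norms then match asymptotically, and together with weak convergence this gives strong convergence.

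The \textbf{main obstacle} is this last step, namely the exact equality of the $L^2$ and $\dot H^{1/2}$ masses, which rules out radiation.
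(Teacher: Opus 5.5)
Your architecture (weak limits in moving frames, a finite-dimensional quasi-periodic profile on $E_\mu$, then an energy balance to upgrade to strong convergence) is the paper's. However, the steps that carry the real content are asserted rather than proved, and one of them is justified by an argument that fails.

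\textbf{Nonsingularity on the real axis.} You say the profile is nonsingular for real $x$ ``thanks to the imaginary part of the diagonal''. For $N_\mu\ge 2$ this does not work. On $E_\mu$ the operator $L_\mu(\theta)$ is Hermitian, so by Lemma \ref{Xstarff} the anti-Hermitian part of $\mathcal P_\mu X^*+L_\mu(\theta)$ is the rank-one form $f\mapsto-\frac{1}{4\pi}|I_+(f)|^2$. A real eigenvalue therefore only forces its eigenvector into the hyperplane $\{I_+=0\}$, and negative diagonal entries do not exclude this.

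The paper needs Proposition \ref{invertibility} here, which rests on the commutator identity of Lemma \ref{lem:com}. A real eigenspace is invariant under $H_{u_0}\Omega_\theta$, hence under $H_{u_0}^2$, so it would contain some $\varphi_j$. This contradicts $I_+(\varphi_j)=\mu\mathrm{e}^{-i\alpha_j}/\sigma_j\neq 0$. Without this step you know neither that $R_\mu^0\in L^2_+(\R)$ nor that $G_\mu$ is smooth on $\T^{J_\mu}\times\R$, and the contour argument for the $L^2$ identity is unavailable.

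Relatedly, your Schur-complement claim that the off-block resolvent is $O(1/t)$ is false in operator norm. The unbounded $X^*$ compensates the drift on vectors of $E_\mu^\perp$ concentrated near $x=\Re z+t\mu^2/2\pi$. Decay holds only on the vectors $f_\nu\in E_\nu$. One writes $f_\nu=(L_{u_0}(t)-t\mu^2/2\pi)\frac{2\pi f_\nu}{t(\nu^2-\mu^2)}+O(1/t)$, uses $f_\nu\in\mathrm{Dom}\,X^*$, and concludes by density, as in Lemma \ref{tecnic2}.

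\textbf{Identification as a solution and the energy identities.} The more serious gap is here, which you rightly call the main obstacle but do not address.
\begin{enumerate}
\item Route (a) presupposes that $H_{R_\mu^0}^2$ has eigenvalues $(\sigma_j^2)_{j\in J_\mu}$ with the right eigenvectors, which is precisely what must be shown.
\item Route (b) relies on weak sequential continuity of the Szeg\H{o} flow in $H^{1/2}_+(\R)$. You neither prove nor cite this, and the paper avoids it.
\item The statement that $H_{R_\mu^0}^2$ has spectrum $\omega_\mu$ and the identity $\Vert R_\mu^0\Vert_{L^2}=\Vert\mathcal P_\mu u_0\Vert_{L^2}$ are simply asserted.
\end{enumerate}

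The missing idea is Proposition \ref{prop:cancel}: a quasi-periodic function of the form $\e'+\delta$ with $\e,\delta\to 0$ vanishes identically. It is used twice.
\begin{enumerate}
\item Applied to $h_\mu(t)=\int H_\mu(t\omega_\mu,x)\bar\zeta(x)\,dx$, with the equation for $u$ and \eqref{weakstar}, it gives $H_\mu\equiv 0$ in \eqref{eq:Hmu}. Hence \eqref{R_mu} is an exact solution with datum $R_\mu^0$.
\item Applied with $\e=0$ to the Lax identity \eqref{HuOmegaphi} in the moving frame, where $\Omega_t\varphi_j(\cdot+t\mu^2/2\pi)=Q_j(t\omega_\mu)$ is quasi-periodic and precompact in $L^2_+$, it gives $H_{R_\mu^0}q_j=\sigma_j\mathrm{e}^{i\alpha_j}q_j$ (Lemma \ref{spectralRmu}). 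The degree bound on $R_\mu^0$ then yields $\mathrm{Tr}\,H_{R_\mu^0}^2=\sum_{j\in J_\mu}\sigma_j^2$.
\end{enumerate}

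The $L^2$ identity needs a separate argument. By Lemma \ref{Xstarff}, $|R_\mu^0(x)|^2=\frac1\pi\Im\langle\psi_\mu^0(x,\cdot),\mathcal P_\mu u_0\rangle$, and by Cauchy's formula $\int_{-R}^R(\mathcal P_\mu X^*-x)^{-1}dx\to i\pi\,\mathrm{Id}_{E_\mu}$.

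Once $H_{R_\mu^0}^2q_j=\sigma_j^2q_j$ is known, your route (a) can in fact be completed. The map $f\mapsto q_f$, with $q_f(z)=\frac1{2i\pi}I_+((\mathcal P_\mu X^*-z)^{-1}f)$, is an isometry from $E_\mu$ into $L^2_+$ that intertwines $\mathcal P_\mu X^*$ with $X^*$ and preserves $I_+$. But that spectral fact is exactly what your proposal lacks.
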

\begin{remark} Notice that $\Vert u_0\Vert_{L^2}^2=\sum_{\mu\in\mathcal M}\mu^2N_\mu <\infty$. In particular, the sum over $\mu\ge {\e}$ is a finite sum. \end{remark}
\begin{remark}
In the special case $|J_\mu|=1$, $R_\mu $ is a soliton solution. If $|J_\mu|>1$, $R_\mu$ can be seen as a traveling quasi--periodic breather solution in the sense of Definition \ref{breather}.
\end{remark}
\subsection{A weak convergence result}
As a first step, we are going to identify the behaviour of $u(t,\cdot+\frac t{2\pi}\mu^2)$ in $H^{1/2}_+(\R)$ as $t\to \infty$ for every $\mu \in \mathcal M$. 
Recall that 
\begin{eqnarray*}
u(t,z+\frac t{2\pi}\mu^2)&=&\Omega_t({\rm e}^{-itH_{u_0}^2}u_0)(z+\frac t{2\pi}\mu^2)\\
&=& \sum_\nu\nu\sum_{j\in J_\nu}\Omega_t({\rm e}^{-it\sigma_j^2}\varphi_j)(z+\frac t{2\pi}\mu^2).
\end{eqnarray*}

Let us compute $L_{u_0}(t)$. Writing as before $${\rm e}^{-itH_{u_0}^2}u_0=\sum_\nu \nu\sum_{j\in J_\nu}{\rm e}^{-it\sigma_j^2} \varphi_j,$$ we get
\begin{eqnarray*}
L_{u_0}(t)f=\frac 1{2\pi}\int_0^t\sum_{\nu,\rho\in \mathcal M} \sum_{j\in J_\nu}\sum_{k\in J_{\rho}}\nu\,\rho\,{\rm e}^{is(\sigma_k^2-\sigma_j^2)}\langle f,\varphi_k\rangle \varphi_j ds
\end{eqnarray*}
which gives
\begin{equation}\label{Lu2}
L_{u_0}(t)f=\frac t{2\pi}\sum_\nu \nu^2\mathcal P_\nu(f) +\sum_{\nu,\rho\in\mathcal M} \nu\rho \sum_{\substack{j\in J_\nu,\\k\in J_{\rho} \\ j\neq k}}a_{j,k}(t)\langle f,\varphi_k\rangle \varphi_j\end{equation}
with,  for  $j\neq k$,
\begin{equation}\label{eq:ajk}
a_{j,k}(t)=\frac{{\rm e}^{it(\sigma_k^2-\sigma_j^2)}-1}{\pi(\sigma_k^2-\sigma_j^2)}.
\end{equation}

At this stage we observe the following fact.
\begin{remark}\label{rem:invertible}
As for any $h\in E_\mu$, for any $z\in \C_+$, 
\begin{eqnarray*}
\mathrm{Im}\left\langle \mathcal P_{\mu}\left(X^*-z+L_{u_0}(t)-\mu^2\frac t{2\pi}\right)h,h\right\rangle &=&\mathrm{Im}\langle X^*h,h\rangle-\mathrm{Im}(z) \Vert h\Vert_{L^2}^2\\
&\le& -\mathrm{Im}(z) \Vert h\Vert_{L^2}^2,
\end{eqnarray*}
the operator $$\mathcal P_{\mu}\left(X^*-z+L_{u_0}(t)-\mu^2\frac t{2\pi}\right):E_\mu\to E_\mu$$ is injective. Since $E_\mu$ is finite dimensional, this operator is bijective and $$\left(\mathcal P_{\mu}\left(X^*-z+L_{u_0}(t)-\mu^2\frac t{2\pi}\right)\right)^{-1}$$ is well defined as a linear operator on $E_\mu$. Furthermore, it is uniformly bounded as $t\to \infty$, since
$$\left \Vert \left(\mathcal P_{\mu}\left(X^*-z+L_{u_0}(t)-\mu^2\frac t{2\pi}\right)\right)^{-1}f\right \Vert \leq \frac{1}{\Im z}\Vert f\Vert, \quad f\in E_\mu .$$
\end{remark}

We establish a crucial lemma relating the above spectral theory of $H_{u_0}^2$ with the operator $\Omega_t$ from Corollary \ref{cor:explicit} arising in  the explicit formula \eqref{Explicit}.

\begin{lemma}\label{tecnic2}
Assume $u_0\in H^{\frac 12}_+$. \\
For any $\nu\neq \mu$, $\mu,\nu\in \mathcal M$, any $f_\nu\in E_\nu$,
$$\Omega_t\left(f _\nu\right)\left(z+\frac t{2\pi}\mu^2\right)\td_t,\infty 0 \quad\mathrm{ a.e } \; z\in\C_+.$$
More generally, if $g\in E_\mu^\perp\cap\overline{ \mathrm{Im}H_{u_0}}$,
$$\Omega_t\left(g\right)\left(z+\frac t{2\pi}\mu^2\right)\td_t,\infty 0 \quad\mathrm{ a.e } \; z\in\C_+,$$
with uniform convergence if $g$ belongs to a compact subset of $E_\mu^\perp\cap\overline{ \mathrm{Im}H_{u_0}}$.
For $f_\mu\in E_\mu$, one has
$$ \Omega_t\left(f_\mu\right)\left(z+\frac t{2\pi}\mu^2\right)-\frac 1{2i\pi}I_+\left(\Psi_\mu(t,z)\right)\td_t,\infty 0  \quad\mathrm{ a.e } \; z\in\C_+$$
where \begin{equation}\label{Psi}\Psi_{\mu}(t,z):=\left(\mathcal P_{\mu}\left(X^*-z+L_{u_0}(t)-\mu^2\frac t{2\pi}\right)\right)^{-1}f_\mu
\end{equation}

\end{lemma}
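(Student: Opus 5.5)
The plan is to compare the resolvent in the explicit formula \eqref{Omega} with the shifted point $w:=z+\frac t{2\pi}\mu^2$, for which $\mathrm{Im}\, w=\mathrm{Im}\, z$. Set
$$A_t:=X^*+L_{u_0}(t)-w=X^*-z+\frac t{2\pi}\sum_\nu(\nu^2-\mu^2)\mathcal P_\nu+K(t),$$
where $K(t)$ is the oscillating part of \eqref{Lu2}. By definition, $\Omega_t(f)(w)=\frac1{2i\pi}I_+(A_t^{-1}f)$.

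Three facts will be used repeatedly. First, by Lemma \ref{Xstarff} and the self-adjointness of $L_{u_0}(t)$, we have $\mathrm{Im}\langle A_th,h\rangle\le-\mathrm{Im}\, z\,\Vert h\Vert^2$, so $A_t^{-1}$ is well defined with norm at most $1/\mathrm{Im}\, z$. Second, estimate \eqref{Pointwise}, applied at $w$, gives $|\Omega_t(h)(w)|\le \Vert h\Vert/(2\sqrt{\pi\,\mathrm{Im}\, z})$ uniformly in $t$. Third, $K(t)\varphi_k$ stays in a compact subset of $\overline{\mathrm{Im}H_{u_0}}$. To see this, note that $K(t)\varphi_k=\sum_{j\ne k}\rho_j\rho_k a_{j,k}(t)\varphi_j$, where $\rho_j:=\langle u_0,\varphi_j\rangle$. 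Since $\sigma_k^2>0$ is simple and $\sigma_j\to0$, we have $|a_{j,k}(t)|\le 2/(\pi|\sigma_k^2-\sigma_j^2|)$, which is bounded in $j$. The coefficients are therefore dominated by the fixed $\ell^2$ sequence $C\rho_j$, which yields compactness.

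\emph{Step 1: $f_\nu\in E_\nu$ with $\nu\neq\mu$.} By Lemma \ref{EigenfunctionDomXStar}, $f_\nu\in\mathrm{Dom}X^*$, and $I_+(f_\nu)$ is finite. We compute
$$A_tf_\nu=\frac t{2\pi}(\nu^2-\mu^2)f_\nu+h_t,\qquad h_t:=(X^*-z)f_\nu+K(t)f_\nu,$$
where $h_t$ is bounded in $L^2$ uniformly in $t$ by the third fact. Applying $A_t^{-1}$ gives
$$A_t^{-1}f_\nu=\frac{2\pi}{t(\nu^2-\mu^2)}\bigl(f_\nu-A_t^{-1}h_t\bigr).$$
Taking $I_+$ and using $I_+(A_t^{-1}h_t)=2i\pi\,\Omega_t(h_t)(w)$, we obtain
$$|\Omega_t(f_\nu)(w)|\le \frac{C}{t}\Bigl(|I_+(f_\nu)|+\frac{\sup_t\Vert h_t\Vert}{\sqrt{\mathrm{Im}\, z}}\Bigr)=O(1/t).$$
This holds for every $z\in\C_+$, not only almost every $z$.

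\emph{Step 2: general $g\in E_\mu^\perp\cap\overline{\mathrm{Im}H_{u_0}}$.} Such a $g$ is the $L^2$-limit of finite sums $g_n\in\bigoplus_{\nu\ne\mu}E_\nu$. Step 1 gives the convergence for each $g_n$. The second fact gives $|\Omega_t(g-g_n)(w)|\le \Vert g-g_n\Vert/(2\sqrt{\pi\,\mathrm{Im}\, z})$ uniformly in $t$, and an $\varepsilon/3$ argument concludes. For a compact set of $g$'s, the same uniform-in-$t$ Lipschitz bound, combined with a finite $\varepsilon$-net, yields uniform convergence.

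\emph{Step 3: $f_\mu\in E_\mu$.} Here I use a Schur-complement identity. Let $\Psi=\Psi_\mu(t,z)\in E_\mu\subset\mathrm{Dom}X^*$, which is bounded in $t$ by Remark \ref{rem:invertible}. Then
$$A_t\Psi=\mathcal P_\mu A_t\Psi+(I-\mathcal P_\mu)A_t\Psi=f_\mu+r_t,\qquad r_t:=(I-\mathcal P_\mu)\bigl(X^*\Psi+K(t)\Psi\bigr),$$
using that the diagonal part of $L_{u_0}(t)$ preserves $E_\mu$. Hence
$$\Omega_t(f_\mu)(w)=\frac1{2i\pi}I_+(\Psi)-\Omega_t(r_t)(w),$$
and it remains to show that $\Omega_t(r_t)(w)\to0$ via Step 2. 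The family $\{r_t\}$ is relatively compact, because $\Psi$ ranges in a bounded subset of the finite-dimensional space $E_\mu$ and by the third fact. The part $(I-\mathcal P_\mu)K(t)\Psi$ lies in $E_\mu^\perp\cap\overline{\mathrm{Im}H_{u_0}}$.

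The main obstacle is the term $(I-\mathcal P_\mu)X^*\varphi_j$, which may have a component in $\ker H_{u_0}$. Lemma \ref{EigenfunctionDomXStar} only controls $(\sigma_j^2-H_{u_0}^2)X^*\varphi_j\in\mathrm{Span}(u_0,H_{u_0}u_0)$. This shows that the projection of $X^*\varphi_j$ onto $\overline{\mathrm{Im}H_{u_0}}$ is explicit, but it says nothing about the kernel part. For that component I would argue directly: on $\ker H_{u_0}$ the operator $L_{u_0}(t)$ vanishes, since its range and co-range lie in $\overline{\mathrm{Im}H_{u_0}}$. I would then try to show $\Omega_t(k)(w)\to0$ for fixed $k\in\ker H_{u_0}$, using \eqref{UOmega}: there $\Omega_t k=U(t)k$. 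This should follow either by identifying $U(t)k$ with a wave translating at a speed different from $\mu^2/2\pi$, or by a weak-convergence argument in which one tests against the reproducing kernel at $w$. If that fails, a fallback is to pick the eigenbasis decomposition so that $X^*\Psi$ is first corrected by a fixed element of $\mathrm{Dom}X^*$ whose kernel part is handled as in Step 1, with $\nu^2$ replaced by $0$; this works because $\mu\ne0$.
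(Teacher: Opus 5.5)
Your Steps 1--3 follow the paper's proof essentially line by line: the same identity for $A_t^{-1}f_\nu$, the same density argument via \eqref{Pointwise}, and the same Schur-complement decomposition of $A_t\Psi_\mu$. \textbf{The gap is at the very end of Step 3.} You leave open whether $r_t=(I-\mathcal P_\mu)(X^*\Psi+K(t)\Psi)$ lies in $\overline{\mathrm{Im}H_{u_0}}$, and you offer only tentative remedies (``I would try'', ``this should follow'', ``if that fails''). As written, Step 2 therefore cannot be invoked and the proof is not closed.

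The obstacle is not real, and the lemma you cite already removes it; it is not true that Lemma \ref{EigenfunctionDomXStar} ``says nothing about the kernel part''. Write $X^*\varphi_j=k_j+m_j$ with $k_j\in\ker H_{u_0}$ and $m_j\in\overline{\mathrm{Im}H_{u_0}}=(\ker H_{u_0})^\perp$. Since $H_{u_0}^2k_j=0$ and $(\sigma_j^2-H_{u_0}^2)m_j\in\overline{\mathrm{Im}H_{u_0}}$, the $\ker H_{u_0}$-component of $(\sigma_j^2-H_{u_0}^2)X^*\varphi_j$ is $\sigma_j^2k_j$. The right-hand side in the lemma lies in $\mathrm{Span}(u_0,H_{u_0}u_0)\subset\overline{\mathrm{Im}H_{u_0}}$, because $u_0=\lim_{\varepsilon\to0}H_{u_0}\chi_\varepsilon$ in $L^2$. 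Hence $\sigma_j^2k_j=0$, and since $\sigma_j\neq0$, $X^*\varphi_j\in\overline{\mathrm{Im}H_{u_0}}$.

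Equivalently, by \eqref{SstarH}, $S(\eta)^*\varphi_j=\sigma_j^{-2}H_{u_0}S(\eta)H_{u_0}\varphi_j\in\mathrm{Im}H_{u_0}$. So the difference quotients converging to $-iX^*\varphi_j$ stay in $\mathrm{Im}H_{u_0}$. This is exactly what the paper's remark ``all the operators involved here act on $\overline{\mathrm{Im}H_{u_0}}$'' relies on. With it, $\{r_t\}$ is relatively compact in $E_\mu^\perp\cap\overline{\mathrm{Im}H_{u_0}}$, and your Step 2 finishes the argument.

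Your fallback can also be made rigorous, and it gives Step 2 on all of $E_\mu^\perp$, but you have not carried it out. For $k\in\ker H_{u_0}\cap\mathrm{Dom}X$ one has $L_{u_0}(t)k=0$, since $\mathrm{e}^{-isH_{u_0}^2}u_0\perp\ker H_{u_0}$. Then $A_tk=-\frac{t\mu^2}{2\pi}k+(X^*-z)k$, and the Step 1 computation gives $\Omega_t(k)(w)=O(1/t)$. Because $\ker H_{u_0}$ is stable under multiplication by $H^\infty$ functions (one has $H_{u_0}(\theta f)=T_{\bar\theta}H_{u_0}f$), multiplying by $(1-i\varepsilon x)^{-1}$ shows that $\ker H_{u_0}\cap\mathrm{Dom}X$ is dense in $\ker H_{u_0}$. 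The bound \eqref{Pointwise} then extends Step 2 to all of $E_\mu^\perp$. This density step is needed because the kernel part of $X^*\Psi$ is not known to lie in $\mathrm{Dom}X^*$.

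Finally, your displayed formula for $A_t$ omits the term $-\frac{t\mu^2}{2\pi}$ acting on $\ker H_{u_0}$. It is correct only on $\overline{\mathrm{Im}H_{u_0}}$, where $\sum_\nu\mathcal P_\nu=I$.
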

\begin{proof}
Let $\nu\neq \mu$, $\nu,\mu\in \mathcal M$.
We recall that
$$\left(L_{u_0}(t)-\mu^2\frac t{2\pi}\right)f_\nu=(\nu^2-\mu^2)\frac t{2\pi}f_\nu+g_\nu(t)$$
where $g_\nu(t)$ is defined by 
$$g_\nu(t):=\sum_{\rho\in\mathcal M} \rho\nu \sum_{\substack{j\in J_\rho,\;k\in J_{\nu}\\ j\neq k}}a_{j,k}(t)\langle f_\nu,\varphi_k\rangle \varphi_j, $$
and $a_{j,k}(t)$ is given by \eqref{eq:ajk}. 
Notice that the family $\{ g_\nu (t)\}_{t\in \R}$ is uniformly bounded in $L^2_+$.
We define 
$$\Psi_{\nu, \mu}(t)=\frac{ 2\pi f_\nu}{( \nu^2-\mu^2)}$$ so that
$$\left(L_{u_0}(t)-\mu^2\frac t{2\pi}\right)\frac{\Psi_{\nu,\mu} }t=f_\nu+\frac{ 2\pi g_\nu(t)}{t( \nu^2-\mu^2)}.$$
We compute
\begin{eqnarray*}
&&\left [(X^*-z)+(L_{u_0}(t)-\frac{t}{2\pi}\mu^2)\right]^{-1}f_\nu\\
&=& \left [(X^*-z)+(L_{u_0}(t)-\frac{t}{2\pi}\mu^2)\right]^{-1} \left[(L_{u_0}(t)-\frac{t}{2\pi}\mu^2)\frac{\Psi_{\nu, \mu}}t-\frac{ 2\pi g_\nu(t)}{t( \nu^2-\mu^2)}\right]\\
&=&\frac{\Psi_{\nu, \mu}}t-\left[(X^*-z)+(L_{u_0}(t)-\frac{t}{2\pi}\mu^2)\right]^{-1}\left[(X^*-z)\frac{\Psi_{\nu, \mu}}t+\frac{ 2\pi g_\nu(t)}{t( \nu^2-\mu^2)}\right].
 \end{eqnarray*}
 Computing $\Omega_t(f_\nu)(z+\frac{t}{2\pi}\mu^2)$, we get
$$
\Omega_t(f_\nu)(z+\frac{t}{2\pi}\mu^2)=\frac 1t I_+(\Psi_{\nu, \mu})-\Omega_t\left((X^*-z)\frac{\Psi_{\nu, \mu}}t+\frac{ 2\pi g_\nu(t)}{t( \nu^2-\mu^2)}\right)(z+\frac{t}{2\pi}\mu^2).$$
 
Using the pointwise estimate \eqref{Pointwise} of $\Omega_tf$, the fact that  $\Psi_{\nu,\mu}, g_\nu(t) \in L^2_+$ with $\Vert g_\nu(t)\Vert_{L^2}\le C$ and $\Psi_{\nu,\mu}\in \mathrm{ Dom} X^* $ give the first part of Lemma \ref{tecnic2}.

We turn to the second part and consider $\Omega_t(g)(z+\frac{t}{2\pi}\mu^2)$ for $g\in E_\mu^\perp\cap\overline{ \mathrm{Im}H_{u_0}}$. We obtain the result  by a density argument based on the estimate \eqref{Pointwise} since, by linearity, the result holds if $g$ is a finite sum of elements in $E_\nu$ for $\nu\neq \mu$ from the first part. The claimed uniform convergence on compact subsets of $E_\mu^\perp\cap\overline{ \mathrm{Im}H_{u_0}}$ follows as well.

 Finally, we  write for $\Psi_\mu(t,z)\in E_\mu$ defined in \eqref{Psi}
\begin{eqnarray*}
&&\left(X^*-z+L_{u_0}(t)-\mu^2\frac t{2\pi}\right)\Psi_\mu(t,z)\\
&=&\mathcal P_\mu\left(X^*-z+L_{u_0}(t)-\mu^2\frac t{2\pi}\right)\Psi_\mu(t,z)\\
&&+(I-\mathcal P_\mu)\left(X^*-z+L_{u_0}(t)-\mu^2\frac t{2\pi}\right)\Psi_\mu(t,z)\\
&=&f_\mu+(I-\mathcal P_{\mu})(X^*+L_{u_0}(t)-\frac{\mu ^2t}{2\pi})\Psi_{\mu}(t,z).
\end{eqnarray*}
Hence, \begin{align*}
\Omega_t(f_\mu)\left(z+t\frac{\mu^2}{2\pi}\right)&=\frac 1{2i\pi}I_+(\Psi_\mu(t,z))-\\
&\Omega_t\left((I-\mathcal P_{\mu})(X^*+L_{u_0}(t)-\frac{\mu ^2t}{2\pi})\Psi_{\mu}(t,z)\right)\left(z+t\frac{\mu^2}{2\pi}\right).
\end{align*}
At this stage, we observe  that, given $z\in \C_\mu, f_\mu \in E_\mu$, in view of Remark \ref{rem:invertible}, the family $\{ \Psi_\mu (t,z)\}_{t\in \R}$ is bounded in $E_\mu $, hence it is relatively compact, since $E_\mu $ is finite dimensional.
\\
Consequently, using \eqref{Lu2}, $(I-\mathcal P_{\mu})(X^*+L_{u_0}(t)-\frac{\mu ^2t}{2\pi})\Psi_{\mu}(t,z)$ belongs to a compact subset of $E_\mu^\perp\cap\overline{ \mathrm{Im}H_{u_0}}$, therefore from the second part the last term in the above identity tends to zero (all the operators involved here act on $\overline{ \mathrm{Im}H_{u_0}}$).  This completes the proof of  Lemma \ref{tecnic2}.
\end{proof}
Using this result, we infer the following 
\begin{corollary}\label{pointwiselimit}
The following holds
$$
\forall z\in\C_+,\quad u\left (t,z+\frac t{2\pi}\mu^2\right )-\frac 1{2i\pi}I_+\left(\psi_\mu(t,z)\right)\td_t,\infty 0,$$ 
where
\begin{equation}\label{psi}\psi_\mu(t,z):=\left(\mathcal P_{\mu}\left(X^*-z+L_{u_0}(t)-\mu^2\frac t{2\pi}\right)\right)^{-1}\mathcal P_\mu({\rm e}^{-itH_{u_0}^2}u_0) .\end{equation}
\end{corollary}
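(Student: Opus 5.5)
We apply Lemma \ref{tecnic2} term by term to the decomposition of ${\rm e}^{-itH_{u_0}^2}u_0$ along the spectral subspaces. The explicit formula \eqref{Explicit} together with \eqref{Omega} gives, for every $z\in\C_+$,
$$u\Big(t,z+\frac t{2\pi}\mu^2\Big)=\Omega_t\big({\rm e}^{-itH_{u_0}^2}u_0\big)\Big(z+\frac t{2\pi}\mu^2\Big).$$
We split
$${\rm e}^{-itH_{u_0}^2}u_0=\mathcal P_\mu\big({\rm e}^{-itH_{u_0}^2}u_0\big)+g(t),\qquad g(t):=(I-\mathcal P_\mu)\big({\rm e}^{-itH_{u_0}^2}u_0\big)=\sum_{\nu\neq\mu}\nu\sum_{j\in J_\nu}{\rm e}^{-it\sigma_j^2}\varphi_j .$$
Since $u_0\in\overline{\mathrm{Im}H_{u_0}}$ (indeed $u_0$ is the $\ell^2$ sum of its components on the $\varphi_j$), we have $g(t)\in E_\mu^\perp\cap\overline{\mathrm{Im}H_{u_0}}$ for every $t$.

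\textbf{The part $g(t)$.} We show that $\{g(t)\}_{t\in\R}$ lies in a compact subset of $E_\mu^\perp\cap\overline{\mathrm{Im}H_{u_0}}$. Its coefficients on the orthonormal family $(\varphi_j)$ have constant moduli, given by the square-summable sequence $(\langle u_0,\varphi_j\rangle)_{j\notin J_\mu}$. Hence the tails $\sum_{j>N}|\langle g(t),\varphi_j\rangle|^2$ tend to $0$ uniformly in $t$, while the finitely many remaining coordinates stay bounded. This is the standard compactness criterion in $\ell^2$. The uniform statement in the second part of Lemma \ref{tecnic2} then yields
$$\Omega_t(g(t))\Big(z+\frac t{2\pi}\mu^2\Big)\td_t,\infty 0 .$$
The point to watch is that the argument of $\Omega_t$ depends on $t$, which is exactly why the uniform version is needed.

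\textbf{The part $f_\mu(t):=\mathcal P_\mu({\rm e}^{-itH_{u_0}^2}u_0)$.} This term ranges in the bounded subset $\{\mu\sum_{j\in J_\mu}c_j\varphi_j:\ |c_j|=1\}$ of the finite-dimensional space $E_\mu$, and it also depends on $t$. Lemma \ref{tecnic2} is stated for a fixed $f_\mu$, but it is linear in $f_\mu$. Writing $f_\mu(t)=\mu\sum_{j\in J_\mu}{\rm e}^{-it\sigma_j^2}\varphi_j$ as a combination of the fixed vectors $\varphi_j$, $j\in J_\mu$, with coefficients bounded uniformly in $t$, we apply the third statement of the lemma to each $\varphi_j$ and recombine. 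By linearity of $\Psi_\mu$ in $f_\mu$, the recombined main terms are exactly $\frac1{2i\pi}I_+(\psi_\mu(t,z))$, with $\psi_\mu$ as in \eqref{psi}. Each error term tends to $0$ and is multiplied by a bounded coefficient, so the total error tends to $0$.

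Adding the two limits gives the claim. The proof of Lemma \ref{tecnic2} only asserts convergence almost everywhere, while the corollary claims it for every $z\in\C_+$. Inspecting that proof, it actually holds for every $z$: it rests on the pointwise bound \eqref{Pointwise} and on Remark \ref{rem:invertible}, both valid for every $z\in\C_+$. We will invoke it in that form. The main obstacle is the compactness of $\{g(t)\}$ in the first step, which makes the uniform convergence statement applicable.
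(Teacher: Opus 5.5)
Your proposal is correct and follows the paper's proof: you split ${\rm e}^{-itH_{u_0}^2}u_0$ into its $E_\mu$ component and a remainder lying in a compact subset of $E_\mu^\perp\cap\overline{\mathrm{Im}H_{u_0}}$, and then apply Lemma \ref{tecnic2}, exactly as the paper does. What you add is detail the paper leaves implicit: the $\ell^2$ argument for compactness of the remainder, linearity to handle the $t$-dependence of the $E_\mu$ component, and the observation that the lemma's convergence holds for every $z\in\C_+$, not only almost every $z$.
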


\begin{proof}
The result is a direct consequence of Lemma \ref{tecnic2}. Just write 
$$
u\left (t,z+\frac t{2\pi}\mu^2\right )
=\Omega_t({\mathrm e}^{-itH_{u_0}^2} u_0)\left (z+\frac t{2\pi}\mu^2\right )$$

and use $${\mathrm e}^{-itH_{u_0}^2} u_0=\mathcal P_\mu({\rm e}^{-itH_{u_0}^2}u_0)+f_{\mu^\perp}$$ with $f_{\mu^\perp}(t):=(I-\mathcal P_\mu)({\rm e}^{-itH_{u_0}^2}u_0)$ staying in a compact subset of $ E_\mu^\perp\cap\overline{ \mathrm{Im}H_{u_0}}$.

 \end{proof}
\subsection{The traveling quasi--periodic breather}
Lemma \ref{pointwiselimit} can be reformulated as 
\begin{equation}\label{ptw}
\forall z\in\C_+,\quad u\left (t,z+\frac t{2\pi}\mu^2\right )-R_\mu \left (t,z+\frac t{2\pi}\mu^2\right )\td_t,\infty 0,
\end{equation}
where
\begin{equation}\label{R_mu}
R_\mu (t,z):=\frac{1}{2i\pi}I_+\left(\mathcal P_{\mu}(X^*-z+L_{u_0}(t))^{-1}\mathcal P_\mu({\mathrm e}^{-itH_{u_0}^2}u_0)\right ).
\end{equation}
We are going to identify the function $R_\mu$ as some solution to the cubic Szeg\H{o} equation. 
Firstly, we observe that the function $\psi_\mu$ from Lemma \ref{pointwiselimit} is defined in \eqref{psi} through the inversion of a matrix $A_\mu (t,z)$ of size $|J_\mu|$, given by
$$\left(A_\mu(t,z)\right)_{j,k}=\left\{\begin{array}{ccl}
&\langle X^*\varphi_j,\varphi_j\rangle -z&\text{ if } j=k,\\
&\langle X^*\varphi_k,\varphi_j\rangle +\mu^2 a_{j,k}(t)&\text{ otherwise. }
\end{array}\right.$$
We denote by  $X_\mu$  the column of components of   $\mathcal P_\mu({\rm e}^{-itH_{u_0}^2}u_0)$ in the basis $(\varphi_j)_{j\in J_\mu}$, given, for $1\le j\le |J_\mu|$, by
$(X_\mu)_j:=\mu\mathrm{e}^{-it\sigma_j^2}$.\\
On the other hand, we can write $H_{u_0}(\varphi_k)=\sigma_k \mathrm{e}^{i\alpha_k}\varphi_k$ for some $\alpha_k\in \T$, so that, for $k\in J_\mu$,
$$I_+(\varphi_k)=\frac 1{\sigma_k^2}\langle \varphi_k,H_{u_0}(u_0)\rangle=\frac{ \mu\mathrm{e}^{-i\alpha_k}}{\sigma_k}$$
which  eventually leads to
\begin{equation}\label{Rmu2}
R_\mu\left (t,z+\frac t{2\pi}\mu^2\right )=\frac 1{2i\pi}\langle \left(A_\mu(t,z)\right)^{-1}X_\mu ,Y_\mu \rangle
\end{equation}
where $Y_\mu:=\left(\frac{ \mu\mathrm{e}^{-i\alpha_k}}{\sigma_k}\right)_{k\in J_\mu}$.

We are going to reformulate equation \eqref{Rmu2} by introducing variables on the torus $\T^{J_\mu}$. 
Given $\theta \in \T^{J_\mu}$, we define $L_\mu (\theta )=E_\mu \to E_\mu $ and $\Omega_\theta:E_\mu \to E_\mu $ by 
$$\lan L_\mu (\theta) \varphi_k,\varphi_j\ran :=(1-\delta_{j,k})\frac{\mu ^2(\mathrm{e}^{i(\theta _k-\theta _j)}-1)}{2i\pi (\sigma_k^2-\sigma_j^2)},\quad \Omega_\theta \varphi_j=\mathrm{e}^{i\theta_j}\varphi_j,\quad j,k\in J_\mu .$$
\begin{proposition}\label{invertibility}
The operator $\mathcal P_\mu X^* +L_\mu (\theta ):E_\mu \to E_\mu $ has no real eigenvalue.
\end{proposition}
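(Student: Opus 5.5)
The plan is to argue by contradiction. Work in the orthonormal basis $(\varphi_j)_{j\in J_\mu}$ of $E_\mu$ and set $M:=\mathcal P_\mu X^*+L_\mu(\theta)$ and $D:=\mathcal P_\mu H_{u_0}^2|_{E_\mu}=\mathrm{diag}(\sigma_j^2)$. Two structural facts drive the argument: $L_\mu(\theta)$ is self-adjoint, and the commutator $[M,D]$ has rank at most two. These will show that the set of eigenvectors of $M$ with a real eigenvalue, if nonempty, is $D$-invariant, and this is incompatible with Lemma \ref{Xstarff}.

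\emph{Step 1 (imaginary part).} A direct check on the entries shows that $L_\mu(\theta)$ is self-adjoint: conjugating $\langle L_\mu(\theta)\varphi_j,\varphi_k\rangle$ reproduces $\langle L_\mu(\theta)\varphi_k,\varphi_j\rangle$, since both the $2i\pi$ and the factor $\sigma_k^2-\sigma_j^2$ change sign. Hence, for $h\in E_\mu$, Lemma \ref{Xstarff} gives
$$\mathrm{Im}\langle Mh,h\rangle=\mathrm{Im}\langle X^*h,h\rangle=-\frac1{4\pi}|I_+(h)|^2 .$$
So if $Mh=\lambda h$ with $\lambda\in\R$, then $I_+(h)=0$. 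In that case $(M-M^*)h=0$ as well, because $M-M^*$ is a multiple of the rank-one form $h\mapsto I_+(h)$ (polarize the identity above), and therefore $M^*h=\lambda h$.

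\emph{Step 2 (the commutator with $D$).} Apply Lemma \ref{EigenfunctionDomXStar} to $\varphi_k$, take the scalar product with $\varphi_j$, and use $\langle u_0,\varphi_j\rangle=\mu$, $H_{u_0}\varphi_j=\sigma_je^{i\alpha_j}\varphi_j$ and $I_+(\varphi_k)=\mu e^{-i\alpha_k}/\sigma_k$. This gives, for $j\neq k$,
$$(\sigma_k^2-\sigma_j^2)\langle X^*\varphi_k,\varphi_j\rangle=\frac{i\mu^2}{2\pi}\Big(\frac{\sigma_j}{\sigma_k}e^{i(\alpha_j-\alpha_k)}-1\Big).$$
Adding the $L_\mu(\theta)$ contribution, the $-1$ cancels, and we obtain
$$[M,D]=\frac{\mu^2}{2i\pi}\big(aa^*-bb'^*\big),\qquad a_j=e^{-i\theta_j},\quad b_j=\sigma_je^{i\alpha_j},\quad b'_j=\frac{e^{i\alpha_j}}{\sigma_j}.$$
Here $b'^*h$ is proportional to $I_+(h)$. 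The diagonal terms are consistent, since both sides vanish on the diagonal.

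\emph{Step 3 (invariance and contradiction).} Let $K:=\{h\in E_\mu: Mh=\lambda h\}$ for a real $\lambda$, and suppose $K\neq\{0\}$. For $h\in K$ we have $b'^*h=0$ and $M^*h=\lambda h$ by Step 1, so
$$\langle [M,D]h,h\rangle=\langle Dh,M^*h\rangle-\langle D Mh,h\rangle=\lambda\langle Dh,h\rangle-\lambda\langle Dh,h\rangle=0 .$$
By Step 2 this quantity equals $\frac{\mu^2}{2i\pi}|a^*h|^2$, hence $a^*h=0$. Then $[M,D]h=0$, that is $M(Dh)=\lambda Dh$, so $K$ is $D$-invariant. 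Since $D$ is diagonal with simple eigenvalues $\sigma_j^2$ (the positive spectrum is simple), the nonzero $D$-invariant subspace $K$ contains some $\varphi_k$. But Step 1 then forces $I_+(\varphi_k)=0$, which contradicts $I_+(\varphi_k)=\mu e^{-i\alpha_k}/\sigma_k\neq0$.

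The main obstacle is the bookkeeping in Step 2: one has to get the exact rank-two form of $[M,D]$, with all phases and the $2i\pi$ normalization correct, so that the constant terms from $X^*$ and from $L_\mu(\theta)$ cancel. Once that is done, Steps 1 and 3 are short.
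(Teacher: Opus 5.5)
Your proof is correct, but it uses a different commutator from the paper's.

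The paper commutes $\mathcal P_\mu X^*+L_\mu(\theta)$ with the antilinear operator $H_{u_0}\Omega_\theta$ (Lemma \ref{lem:com}). That commutator has rank one, $f\mapsto\frac{1}{2i\pi}\overline{I_+(f)}\,\Omega_\theta^*\mathcal P_\mu u_0$. So once $I_+=0$ on a real eigenspace $V$, the space $V$ is immediately invariant under $H_{u_0}\Omega_\theta$, hence under $(H_{u_0}\Omega_\theta)^2=H_{u_0}^2$. The contradiction is then the same as in your Step 3.

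You commute instead with the linear operator $D$. Written intrinsically, your Step 2 says
\[
[M,D]h=\frac{1}{2i\pi}\bigl(\langle h,\Omega_\theta^*\mathcal P_\mu u_0\rangle\,\Omega_\theta^*\mathcal P_\mu u_0-I_+(h)\,\mathcal P_\mu H_{u_0}u_0\bigr),
\]
which is the finite-dimensional analogue of Corollary \ref{CommutatorXstar}. I checked your entries, phases and normalizations, and they are correct.

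Because this commutator has rank two, $I_+(h)=0$ removes only one term. You therefore need the extra input $M^*h=\lambda h$, which comes from the self-adjointness of $L_\mu(\theta)$ and the rank-one form of $M-M^*$. You then use the sign of $\langle [M,D]h,h\rangle$ to force $a^*h=0$. Both steps are valid.

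Your route makes the computation lighter. The diagonal of $[M,D]$ vanishes automatically, so only the off-diagonal formula \eqref{eq:X*offdiag} enters and \eqref{eq:X*diag} is not needed. There is also no antilinear bookkeeping with the phases $\mathrm{e}^{i(\alpha_k-\theta_j)}$. The paper's route gives invariance in one line, at the cost of the more delicate entrywise computation in Lemma \ref{lem:com}.
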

A crucial step in the proof is the following commutator lemma.
\begin{lemma}\label{lem:com}
For every $f\in E_\mu$,
$$[\mathcal P_\mu X^*+L_\mu (\theta),H_{u_0}\Omega _\theta]f=\frac{1}{2i\pi}\overline{I_+(f)}\Omega_\theta ^*\mathcal P_\mu u_0.$$
\end{lemma}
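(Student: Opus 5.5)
The identity is between operators on the finite-dimensional space $E_\mu$, so I will prove it by comparing matrix coefficients in the basis $(\varphi_j)_{j\in J_\mu}$, keeping track of the fact that $H_{u_0}$, hence $H_{u_0}\Omega_\theta$, is \emph{antilinear}. Since the positive eigenvalues are simple, $H_{u_0}$ preserves each line $\C\varphi_k$, with $H_{u_0}\varphi_k=\sigma_k{\rm e}^{i\alpha_k}\varphi_k$. Hence $H_{u_0}$ commutes with $\mathcal P_\mu$ and preserves $E_\mu$, and
$$H_{u_0}\Omega_\theta\Big(\sum_k c_k\varphi_k\Big)=\sum_k \overline{c_k}\,\sigma_k{\rm e}^{i(\alpha_k-\theta_k)}\varphi_k .$$
Both sides of the claimed identity are antilinear in $f$; the right-hand side is antilinear because of $\overline{I_+(f)}$. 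It therefore suffices to check the identity for $f=\varphi_k$, $k\in J_\mu$, by pairing with each $\varphi_j$, $j\in J_\mu$. For $f=\varphi_k$ the target is
$$\frac{1}{2i\pi}\,\overline{I_+(\varphi_k)}\,\mu\,{\rm e}^{-i\theta_j}=\frac{\mu^2{\rm e}^{i\alpha_k-i\theta_j}}{2i\pi\,\sigma_k},$$
using $I_+(\varphi_k)=\mu{\rm e}^{-i\alpha_k}/\sigma_k$ and $\mathcal P_\mu u_0=\mu\sum_{j\in J_\mu}\varphi_j$.

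The first step is to compute the matrix of $\mathcal P_\mu X^*$ on $E_\mu$. For $j\neq k$, I pair the identity of Lemma \ref{EigenfunctionDomXStar},
$$(\sigma_k^2-H_{u_0}^2)X^*\varphi_k=\frac{i}{2\pi}\bigl(I_+(\varphi_k)H_{u_0}u_0-\langle\varphi_k,u_0\rangle u_0\bigr),$$
with $\varphi_j$. Using self-adjointness of $H_{u_0}^2$, $\langle\varphi_k,u_0\rangle=\mu$ and $\langle H_{u_0}u_0,\varphi_j\rangle=\langle H_{u_0}\varphi_j,u_0\rangle=\sigma_j{\rm e}^{i\alpha_j}\mu$, this gives
$$\langle X^*\varphi_k,\varphi_j\rangle=\frac{i\mu^2}{2\pi(\sigma_k^2-\sigma_j^2)}\Bigl(\frac{\sigma_j}{\sigma_k}{\rm e}^{i(\alpha_j-\alpha_k)}-1\Bigr),\qquad j\neq k .$$
For the diagonal coefficients only the imaginary part will matter. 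Lemma \ref{Xstarff} gives
$$\Im\langle X^*\varphi_k,\varphi_k\rangle=-\frac{1}{4\pi}|I_+(\varphi_k)|^2=-\frac{\mu^2}{4\pi\sigma_k^2}.$$

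The second step is to compute the commutator coefficients. For the linear operator $M:=\mathcal P_\mu X^*+L_\mu(\theta)$ with matrix $m_{jk}=\langle M\varphi_k,\varphi_j\rangle$, I will use the general identity $\langle H_{u_0}\Omega_\theta g,\varphi_j\rangle=\sigma_j{\rm e}^{i(\alpha_j-\theta_j)}\overline{\langle g,\varphi_j\rangle}$. This yields
$$\langle[M,H_{u_0}\Omega_\theta]\varphi_k,\varphi_j\rangle=\sigma_k{\rm e}^{i(\alpha_k-\theta_k)}m_{jk}-\sigma_j{\rm e}^{i(\alpha_j-\theta_j)}\overline{m_{jk}} .$$
\begin{itemize}
\item On the diagonal, $L_\mu(\theta)$ contributes nothing, and the coefficient reduces to $\sigma_k{\rm e}^{i(\alpha_k-\theta_k)}\cdot 2i\,\Im\langle X^*\varphi_k,\varphi_k\rangle$. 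The unknown real part of $\langle X^*\varphi_k,\varphi_k\rangle$ cancels, and by the first step this is exactly $\mu^2{\rm e}^{i(\alpha_k-\theta_k)}/(2i\pi\sigma_k)$, the target with $j=k$.
\item Off the diagonal, I substitute $m_{jk}$, namely the $X^*$ coefficient plus $\mu^2({\rm e}^{i(\theta_k-\theta_j)}-1)/(2i\pi(\sigma_k^2-\sigma_j^2))$, and check algebraically that everything collapses to $\mu^2{\rm e}^{i\alpha_k-i\theta_j}/(2i\pi\sigma_k)$.
\end{itemize}
Note that the target coefficient does not depend on the factor $(\sigma_k^2-\sigma_j^2)^{-1}$. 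Hence the combination $\sigma_k(\ldots)-\sigma_j\overline{(\ldots)}$ must produce a factor $\sigma_k^2-\sigma_j^2$ in the numerator that cancels it.

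I expect the main obstacle to be this off-diagonal algebra. One has to handle the conjugations coming from antilinearity correctly, and check that the phases ${\rm e}^{i\alpha}$ from $H_{u_0}$, the phases ${\rm e}^{i\theta}$ from $\Omega_\theta$ and $L_\mu(\theta)$, and the factors $i/2\pi$ versus $1/2i\pi$ combine exactly. The terms without $\theta$-dependence in $L_\mu(\theta)$ must cancel the constant part $-1$ of the $X^*$ coefficient, and the $\theta$-dependent parts must recombine into a single term. If a sign mismatch appears, I would first recheck the convention for $\langle L_\mu(\theta)\varphi_k,\varphi_j\rangle$ against \eqref{eq:ajk}, where $L_\mu(\theta)$ corresponds to $\mu^2a_{j,k}(t)$ at $\theta_j=t\sigma_j^2$.
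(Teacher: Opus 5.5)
Your proposal is correct and follows essentially the paper's proof. Both compare matrix coefficients in the basis $(\varphi_j)_{j\in J_\mu}$, use the coefficient formula $\sigma_k{\rm e}^{i(\alpha_k-\theta_k)}m_{jk}-\sigma_j{\rm e}^{i(\alpha_j-\theta_j)}\overline{m_{jk}}$ coming from antilinearity, and rely on Lemma \ref{Xstarff} on the diagonal and Lemma \ref{EigenfunctionDomXStar} off the diagonal. The off-diagonal algebra you left pending closes as you predicted: the constant terms cancel, so that $m_{jk}=\frac{i\mu^2}{2\pi(\sigma_k^2-\sigma_j^2)}\bigl(\frac{\sigma_j}{\sigma_k}{\rm e}^{i(\alpha_j-\alpha_k)}-{\rm e}^{i(\theta_k-\theta_j)}\bigr)$, and the commutator coefficient reduces to $\frac{i\mu^2}{2\pi(\sigma_k^2-\sigma_j^2)}\,{\rm e}^{i(\alpha_k-\theta_j)}\,\frac{\sigma_j^2-\sigma_k^2}{\sigma_k}=\frac{\mu^2{\rm e}^{i(\alpha_k-\theta_j)}}{2i\pi\sigma_k}$; the paper does the same computation, only treating the $X^*$ and $L_\mu(\theta)$ contributions separately before adding them.
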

\begin{proof}
We recall the following identities for $j,k\in J_\mu, j\ne k$.
\begin{eqnarray}
&&\Im \lan X^*\varphi_j,\varphi_j\ran =-\frac{\mu ^2}{4\pi \sigma_j^2}, \label{eq:X*diag}\\
 &&\lan X^*\varphi_k,\varphi_j\ran =\frac{i\mu^2}{2\pi (\sigma_k^2-\sigma_j^2)}\left (\frac{\sigma_j}{\sigma_k}\mathrm{e}^{i(\alpha_j-\alpha_k)}-1  \right ), \label{eq:X*offdiag}
\end{eqnarray}
We first calculate, recalling that $H_{u_0}$ is antilinear,
$$
\lan [\mathcal P_\mu X^*, H_{u_0}\Omega _\theta ]\varphi_k,\varphi_j\ran =\sigma_k \mathrm{e}^{i(\alpha_k-\theta_k)}\lan X^*\varphi_k,\varphi_j\ran -\sigma_j\mathrm{e}^{i(\alpha_j-\theta_j)}\lan \varphi_j,X^*\varphi_k\ran $$
If $j=k$, this leads, in view of \eqref{eq:X*diag}, 
\begin{equation}\label{eq:comdiag}
\lan [\mathcal P_\mu X^*, H_{u_0}\Omega _\theta ]\varphi_j,\varphi_j\ran =\frac{\mu ^2}{2i\pi \sigma_j}\mathrm{e}^{i(\alpha_j-\theta_j)}.
\end{equation}
If $j\ne k$, this leads, in view of \eqref{eq:X*offdiag},
$$
\lan [\mathcal P_\mu X^*, H_{u_0}\Omega _\theta ]\varphi_k,\varphi_j\ran =\frac{i\mu ^2}{2\pi(\sigma_k^2-\sigma_j^2)}\left ( \sigma_j\mathrm{e}^{i(\alpha_j-\theta_k)}-\sigma_k \mathrm{e}^{i(\alpha_k-\theta_k)} +\frac{\sigma_j^2}{\sigma_k}\mathrm{e}^{i(\alpha_k-\theta_j)}-\sigma_j \mathrm{e}^{i(\alpha_j-\theta_j)}
\right )
$$
On the other hand, 
$$\lan [L_\mu (\theta), H_{u_0}\Omega _\theta ]\varphi_k,\varphi_j\ran =\sigma_k \mathrm{e}^{i(\alpha_k-\theta_k)}\lan L_\mu (\theta)\varphi_k,\varphi_j\ran -\sigma_j\mathrm{e}^{i(\alpha_j-\theta_j)}\lan \varphi_j,L_\mu (\theta)\varphi_k\ran ,$$
which, for $j\ne k$,  leads to
$$\lan [L_\mu (\theta), H_{u_0}\Omega _\theta ]\varphi_k,\varphi_j\ran =\frac{\mu ^2}{2i\pi (\sigma_k^2-\sigma_j^2)}\left (\sigma_k\mathrm{e}^{i(\alpha_k-\theta_j)}-\sigma_k \mathrm{e}^{i(\alpha_k-\theta_k)}+ \sigma_j\mathrm{e}^{i(\alpha_j-\theta_k)} -\sigma_j \mathrm{e}^{i(\alpha_j-\theta_j)}   \right )    $$
Summing the two above contributions, we infer, for $j\ne k$,
\begin{equation}\label{eq:comoffdiag}
\lan [\mathcal P_\mu X^*+L_\mu (\theta), H_{u_0}\Omega _\theta ]\varphi_k,\varphi_j\ran =\frac{\mu^2}{2i\pi\sigma_k}\mathrm{e}^{i(\alpha_k-\theta_j)}.
\end{equation}
Finally, comparing \eqref{eq:comdiag} and \eqref{eq:comoffdiag}, we have, for all $j,k\in J_\mu$,
$$\lan [\mathcal P_\mu X^*+L_\mu (\theta), H_{u_0}\Omega _\theta ]\varphi_k,\varphi_j\ran =\frac{\mu^2}{2i\pi\sigma_k}\mathrm{e}^{i(\alpha_k-\theta_j)},$$
which precisely means that
\begin{align*}
[\mathcal P_\mu X^*+L_\mu (\theta), H_{u_0}\Omega _\theta ]f&=\frac{\mu^2}{2i\pi}\sum_{k,j\in J_\mu}\lan \varphi_k,f\ran \frac{\mathrm{e}^{i\alpha_k}}{\sigma_k}\mathrm{e}^{-i\theta_j}\varphi_j\\
&=\frac{1}{2i\pi}\overline{I_+(f)}\Omega_\theta^*\mathcal P_\mu u_0.
\end{align*}
\end{proof}
Let us now complete the proof of Proposition \ref{invertibility}. For any $\lambda \in \R$, set $$V_\mu (\theta,\lambda ):=\ker(\mathcal P_\mu X^*+L_\mu (\theta)-\lambda).$$  Given $f\in V_\mu (\theta, \lambda)$, taking the imaginary part of the inner product with $f$ of both sides of 
$$(\mathcal P_\mu X^*+L_\mu (\theta)-\lambda)f=0$$
we get $I_+(f)=0$. Using Lemma \ref{lem:com}, we infer 
$$(\mathcal P_\mu X^*+L_\mu (\theta)-\lambda)H_{u_0}\Omega_\theta f=\frac{1}{2i\pi}\overline{I_+(f)}\Omega_\theta ^*\mathcal P_\mu u_0=0.$$
This means that $V_\mu (\theta, \lambda)$ is preserved by $H_{u_0}\Omega_\theta$, hence by $$(H_{u_0}\Omega_\theta)^2=H_{u_0}^2.$$
If $V_\mu (\theta, \lambda)\ne \{ 0\}$, it therefore contains an eigenvector $\varphi_j$ of $H_{u_0}^2$, which is a contradiction, since $$I_+(\varphi_j)=\frac{\mu}{\sigma_j}\mathrm{e}^{-i\alpha_j}\ne 0.$$
The proof of Proposition \ref{invertibility} is complete.

In view of Proposition \ref{invertibility}, we can introduce the following  function $G_\mu :\T^{J_\mu}\times \overline{\C_+}$,
\begin{equation}\label{eq:Gmu}
G_\mu (\theta,z):=\frac{1}{2i\pi}I_+((\mathcal P_\mu X^*+L_\mu (\theta)-z)^{-1}\Omega_\theta^*\mathcal P_\mu ( u_0)),
\end{equation}
so that $R_\mu (t,z)$ extends to real values of $z$ and, $\forall t\in \R, \forall x\in \R,$ 
\begin{equation}\label{RG}
 R_\mu \left (t,x+t\frac{\mu^2}{2\pi}\right )=G_\mu  (t\omega_\mu, x),\quad \omega_\mu :=(\sigma_j^2)_{j\in J_\mu}.
\end{equation}

Notice that $G_\mu (\theta,\cdot )$ is also a rational function in $L^2_+$, with a denominator of degree $|J_\mu|$. Furthermore, notice that
\begin{align}
& [i\partial_tR_\mu -\Pi (|R_\mu |^2R_\mu)] \left (t, x+t\frac{\mu^2}{2\pi}\right )=H_\mu (t\omega_\mu, x), \label{RmuHmu}\\
&H_\mu (\theta ,x):=\left (i\sum_{j\in J_\mu}\sigma_j^2\partial_{\theta_j}G_\mu -i\frac{\mu^2}{2\pi}\partial_xG_\mu -\Pi (|G_\mu |^2G_\mu)\right )(\theta,x). \label{eq:Hmu}
\end{align}

Furthermore, the pointwise convergence \eqref{ptw} can be rephrased as 
\begin{equation}\label{weakstar}
u\left (t,\cdot +t\frac{\mu^2}{2\pi}\right )-R_\mu \left (t,\cdot +t\frac{\mu^2}{2\pi}\right )\rightharpoonup 0
\end{equation}
as $t\to \infty$, for the weak $*$ convergence in $H^{1/2}_+(\R)$. 

In order to check that $R_\mu $ is a solution of the cubic Szeg\H{o} equation, we shall appeal to an auxiliary result on quasi-periodic functions, of which we first recall the definition.
\begin{definition}
A function $f:\R \to \C$ is said quasi--periodic if there exist $N\in \N,$ $ \omega \in \R^N$ and a continuous function $F:\T^N\to \C$  such that 
\[ \forall t\in \R,\quad f(t)=F(t\omega).\]
\end{definition}
\begin{proposition}\label{prop:cancel}
Let $f$ be a quasi--periodic function such that there exist a $C^1$ function $\e $ and a continuous function $\delta $ such that
$$f(t)=\e '(t)+\delta (t),\quad \e(t)\td_t,\infty0,\quad \delta (t)\td_t,\infty 0.$$
 Then $f(t)=0$ for every $t\in \R$.
\end{proposition}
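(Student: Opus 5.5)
The plan is to average. We know $f(t)=F(t\omega)$ with $F$ continuous on $\T^N$. Integrating the identity $f=\e'+\delta$ over $[T,T+S]$ gives
$$\frac1S\int_T^{T+S} f(t)\,dt=\frac{\e(T+S)-\e(T)}{S}+\frac1S\int_T^{T+S}\delta(t)\,dt .$$
For fixed $S>0$, the right-hand side tends to $0$ as $T\to\infty$, since $\e(T)\to0$, $\e(T+S)\to 0$ and $\sup_{[T,T+S]}|\delta|\to0$. The function $g_S(\theta):=\frac1S\int_0^S F(\theta+s\omega)\,ds$ is continuous on $\T^N$, and the left-hand side equals $g_S(T\omega)$. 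Therefore $g_S(T\omega)\to0$ as $T\to\infty$.

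The key step is the recurrence of the linear flow on the torus. Let $K$ be the closure of $\{t\omega: t\in\R\}$ in $\T^N$; it is a closed subtorus, and the flow $\theta\mapsto\theta+t\omega$ is minimal on it. In particular, for every $\theta_0=t_0\omega$ there is a sequence $T_n\to+\infty$ with $T_n\omega\to t_0\omega$ in $\T^N$. This can be obtained from Kronecker's theorem, or more elementarily from Dirichlet simultaneous approximation: there are $T_n\to\infty$ with $T_n\omega\to0$ mod $\Z^N$, and then $(t_0+T_n)\omega\to t_0\omega$. By continuity of $g_S$, we get $g_S(t_0\omega)=\lim_n g_S((t_0+T_n)\omega)=0$. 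Hence $\int_{t_0}^{t_0+S} f(t)\,dt=0$ for every $t_0\in\R$ and every $S>0$.

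Differentiating in $S$, which is legitimate since $f$ is continuous, gives $f(t_0+S)=0$ for all $t_0$ and $S$, so $f\equiv0$.

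The main obstacle is the recurrence step: producing times $T_n\to\infty$ with $T_n\omega\to0$ in $\T^N$. I would use Dirichlet's theorem applied to $\omega/(2\pi)$ (or to $\omega$, depending on the torus normalization), taking integer $T_n$ so that all components $T_n\omega_j$ are within $1/n$ of $2\pi\Z$. If $\omega=0$, then $f$ is constant and the conclusion is immediate. Uniform continuity of $g_S$ on the compact torus closes the argument. Note also that averaging over windows of fixed length $S$ avoids any issue with $\e$ being merely $C^1$ and tending to $0$ without control on $\e'$.
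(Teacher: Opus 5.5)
Your proof is correct, and it takes a genuinely different route from the paper's.

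\textbf{The paper's route.} The paper first uses Lemma \ref{independence} to replace $\omega$ by a $\Q$--linearly independent vector $\alpha$. For such $\alpha$, Weyl equidistribution $\frac1{2T}\int_{-T}^T H(t\alpha)\,dt\to\hat H(0)$ holds for every continuous $H$ on $\T^d$. The paper then computes each Fourier coefficient $\hat G(k)$ as such a long-time average of $(\tilde\e'+\tilde\delta)\mathrm{e}^{-itk\cdot\alpha}$, integrates the $\tilde\e'$ term by parts, and obtains $\hat G(k)=0$ by Ces\`aro, hence $G\equiv 0$.

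\textbf{Your route.} You avoid both the reduction to independent frequencies and the Fourier analysis on the torus. Averaging over windows of fixed length $S$ disposes of $\e'$ by the fundamental theorem of calculus. The only dynamical input you then need is recurrence of the linear flow ($T_n\omega\to 0$ in $\T^N$ with $T_n\to+\infty$), which holds for every $\omega$, rationally dependent or not. So you need strictly less than equidistribution, and the argument is more elementary.

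Your version also uses the hypotheses only as $t\to+\infty$, exactly as the statement is written. The paper's symmetric averages over $[-T,T]$ tacitly use the decay of $\e$ and $\delta$ at $-\infty$ as well. This is harmless, since one could average over $[0,T]$ instead, but it is not what is stated.

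\textbf{One point to tighten.} Dirichlet's theorem alone gives $1\le q\le Q^N$ with $q\omega$ within $2\pi/Q$ of $2\pi\Z^N$ componentwise. However, $q$ need not tend to infinity with $Q$ (for instance, for rational frequencies). Either of the following fixes this:
\begin{itemize}
\item apply the theorem to $n\omega$ with $Q=n$ and set $T_n=nq_n\geq n$;
\item observe that a bounded sequence of $q$'s forces $q\omega\in 2\pi\Z^N$ for some $q\geq 1$, and use the multiples of that $q$.
\end{itemize}
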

\begin{proof} It relies on the following elementary lemma.
\begin{lemma}\label{independence}
If $f$ is quasiperiodic, there exist $ (d,D)\in \N \times \N,$ a continuous function $G:\T^d \to \C, $ and  $\alpha \in \R^d$ with $\Q$--linearly independent components, such that
\[ \forall t\in \R,\quad f(Dt)=G(t\alpha ).\]
\end{lemma}
\begin{proof}Indeed, let $\Omega :={\mathrm{Vect}}_\Q\{\omega_1,\dots, \omega_N\}$ and $d:=\dim _\Q \Omega$. Let $(\alpha_1,\dots ,\alpha_N)$ be a basis of the $\Q$--linear space $\Omega$. There exists $D\in \N$, and $M:=(m_{jk})_{1\leq j\leq N, 1\leq k\leq d}\in \Z^{N\times d}$ such that
\[\omega =\frac{1}{D}M\alpha .\]
Then, with $G(\theta ):=F(M\theta )$ for every $\theta \in \T^d$, we have \[f(Dt)= F(Dt\omega )=G(t\alpha ).\]
\end{proof}
Let us complete the proof of Proposition \ref{prop:cancel}. Using Lemma \ref{independence}, we have 
$$G(t\alpha )=f(Dt)=\tilde \e '(t)+\tilde \delta (t),$$
where 
$$\tilde \e(t):=D^{-1}\e(Dt),\quad \tilde \delta (t):=\delta (Dt)$$
are going to $0$ as $t\to \infty$. 
 Let $k\in \Z^d$. Since the components of 
$\alpha $ are $\Q$--linearly independent, we have, for every continuous function $H$ on $\T^d$,
\[ \frac 1{2T} \int_{-T}^T H(t\alpha )\, dt \td_T,\infty \hat H(0).\]
Indeed, the left hand side is a uniformly bounded family of linear forms on $C(\T^d)$ and the claimed convergence holds in the special cases \[H(\theta )=\mathrm{e}^{ik.\theta}, k\in \Z^d.\]
Applying this property to $H(\theta)=G(\theta )\mathrm{e}^{-ik.\theta}$, we conclude that
\begin{align*}
\hat G(k)&=\lim_{T\to \infty}\int_{-T}^T [\tilde \e '(t)+\tilde \delta (t]\mathrm{e}^{-itk.\alpha}\, dt\\
&=\lim_{T\to \infty}\left (\frac{\e(T)\mathrm{e}^{-iTk.\alpha}-\e(-T)\mathrm{e}^{iTk.\alpha}}{2T}+\frac{1}{2T}\int_{-T}^T [ik.\alpha\, \tilde \e (t)+\tilde \delta (t)]\mathrm{e}^{-itk.\alpha}\, dt\right )\\
&=0,
\end{align*}
by the Cesaro mean theorem. Hence $G=0$ and $f=0$.
\end{proof}
Given $\zeta \in \mathscr S(\R)\cap L^2_+(\R)$, we set
$$h_\mu(t):=\int_\R H_\mu (t\omega_\mu ,x)\overline \zeta (x)\, dx.$$
Obviously, $h_\mu $ is quasi--periodic. Furthermore, in view of \eqref{RmuHmu} and of the cubic Szeg\H{o} equation \eqref{Szego} for $u$, we have
$$h_\mu (t)=\e_\mu '(t)+\delta_\mu (t),$$
with
\begin{align*}
\e_\mu (t)&:=-i\int_\R \left [u\left (t,\cdot +t\frac{\mu^2}{2\pi}\right )-R_\mu \left (t,\cdot +t\frac{\mu^2}{2\pi}\right )\right ]\overline \zeta (x)\, dx,\\
\delta_\mu (t)&:=\frac{-i\mu ^2}{2\pi}\int_\R \left [u\left (t,\cdot +t\frac{\mu^2}{2\pi}\right )-R_\mu \left (t,\cdot +t\frac{\mu^2}{2\pi}\right )\right ]\overline \zeta '(x)\, dx\\
&+\int_\R \left [|u|^2u\left (t,\cdot +t\frac{\mu^2}{2\pi}\right )-|R_\mu|^2R_\mu \left (t,\cdot +t\frac{\mu^2}{2\pi}\right )\right ]\overline \zeta (x)\, dx.
\end{align*}
Moreover, in view of \eqref{weakstar}, $\e_\mu (t)\td_t,\infty 0$, and $\delta_\mu (t)\td_t,\infty 0$. Applying Proposition \ref{prop:cancel}, we infer that $h_\mu $ is identically zero. Since this holds for every test function $\zeta \in \mathscr S(\R)\cap L^2_+(\R)$, we conclude that $H_\mu (t\omega_\mu ,x)= 0$ for every $t, x,$ and, in view of \eqref{RmuHmu},
that $R_\mu $ is a solution to the cubic Szeg\H{o} equation.

It remains to prove the convergence in $H^{\frac 12}_+$. For this, we shall need the following additional properties of $R_\mu$.

\begin{proposition}
For $R_\mu^0(x)=R_\mu (0,x)$, we have the following identities.
\begin{align}
&\Vert R_\mu^0\Vert_{L^2}^2=\Vert \mathcal P_\mu u_0\Vert_{L^2}^2. \label{RmuL2}\\
&\frac {1}{2\pi}\langle DR_\mu^0, R_\mu^0\rangle =\sum_{j\in J_\mu}\sigma_j^2. \label{RmuH1/2}
\end{align}
\end{proposition}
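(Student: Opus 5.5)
The plan is to read both identities off a finite-dimensional \emph{functional model} for the compressed operator $A:=\mathcal P_\mu X^*_{|E_\mu}$. By Lemma \ref{EigenfunctionDomXStar} this operator is well defined on $E_\mu$. Let $y\in E_\mu$ be the vector representing the linear form $I_+$ on $E_\mu$, that is $I_+(f)=\langle f,y\rangle$. Lemma \ref{Xstarff} gives $\Im\langle Af,f\rangle=-\frac1{4\pi}|\langle f,y\rangle|^2$. Equivalently,
$$A-A^*=-\frac{i}{2\pi}\,\langle\cdot,y\rangle\, y .$$
So $A$ is dissipative with a rank-one imaginary part. By Proposition \ref{invertibility} with $\theta=0$ (where $L_\mu(0)=0$), $A$ has no real eigenvalue, hence all its eigenvalues lie in $\{\Im\lambda<0\}$. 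For $f\in E_\mu$ define
$$\Phi f(z):=\frac1{2i\pi}\langle (A-z)^{-1}f,y\rangle .$$
This is a rational function of $z$ whose poles are the eigenvalues of $A$, all in the lower half-plane. It is $O(1/z)$ at infinity, so $\Phi f\in L^2_+(\R)$, and by definition $R_\mu^0=\Phi(\mathcal P_\mu u_0)$.

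For \eqref{RmuL2}, I will prove that $\Phi:E_\mu\to L^2_+$ is an isometry. I will write
$$\|\Phi f\|^2=\frac1{4\pi^2}\int_\R \big\langle (A^*-x)^{-1}yy^*(A-x)^{-1}f,f\big\rangle\,dx$$
and substitute $yy^*=2\pi i(A-A^*)$. The resolvent identity then gives
$$(A^*-x)^{-1}(A-A^*)(A-x)^{-1}=(A^*-x)^{-1}-(A-x)^{-1}.$$
The integral of the difference of the two resolvents over $\R$ is evaluated by closing the contour: the integrand is $O(x^{-2})$ in the combination, and the spectra of $A$ and $A^*$ lie in opposite half-planes. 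This yields $\frac{1}{2i\pi}\cdot 2\pi i\,\|f\|^2$ up to the normalizing constants, i.e. $\|\Phi f\|=\|f\|$. Applying this to $f=\mathcal P_\mu u_0$ gives \eqref{RmuL2}, with value $\mu^2N_\mu$.

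For \eqref{RmuH1/2}, by \eqref{TrH2} the left-hand side equals $\mathrm{Tr}(H_{R_\mu^0}^2)$. The claim is that $H_{R_\mu^0}$ vanishes on $(\Phi E_\mu)^\perp$ and that, on $\Phi E_\mu$,
$$H_{R_\mu^0}\,\Phi=\Phi\, K ,\qquad K:=H_{u_0}{}_{|E_\mu},$$
where $K$ preserves $E_\mu$ and satisfies $K^2=\mathrm{diag}(\sigma_j^2)$. Granting this, $H_{R^0_\mu}^2$ is unitarily equivalent to $K^2\oplus 0$, so its trace is $\sum_{j\in J_\mu}\sigma_j^2$. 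To establish the intertwining I will use Lemma \ref{lem:com} with $\theta=0$:
$$[A,K]f=\frac1{2i\pi}\overline{\langle f,y\rangle}\,\mathcal P_\mu u_0 .$$
This is the finite-dimensional analogue of Corollary \ref{CommutatorXstar}. I will compute $\Pi(R_\mu^0\,\overline{\Phi f})$ directly on the generating functions $x\mapsto\langle (A-x)^{-1}f,y\rangle$. Taking conjugates turns poles at eigenvalues $\lambda$ into poles at $\bar\lambda\in\C_+$. Applying $\Pi$ to the product removes those upper poles, which amounts to subtracting residue terms. The resulting expression is then rewritten with the commutator identity, in the form $(A-x)^{-1}K-K(A^*-x)^{-1}$ sandwiched between vectors proportional to $y$ and $\mathcal P_\mu u_0$, to identify it with $\Phi(Kf)$. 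Since $\Phi E_\mu$ is the span of the partial fractions $1/(x-\lambda)$ (with multiplicities), and $R_\mu^0$ only has poles at these $\lambda$, the range of $H_{R_\mu^0}$ lies in $\Phi E_\mu$, and $H_{R^0_\mu}$ kills the orthocomplement.

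The main obstacle is this last algebraic verification that $H_{R_\mu^0}\Phi=\Phi K$. I would first try the cleanest route: show that both $\Gamma:=\Phi K\Phi^{-1}\oplus 0$ and $H_{R^0_\mu}$ satisfy the Hankel relation $S(\eta)^*\Gamma=\Gamma S(\eta)$, in its infinitesimal form coming from the commutator identity with $X^*$. Then I would check that they agree on one generating vector, for instance via $\Gamma(\Phi y')$ where $\Phi y'$ approximates $\chi_\varepsilon$, since a Hankel operator is determined by its symbol. Should that prove awkward, a brute-force residue computation in the eigenbasis of $A$ is the fallback.
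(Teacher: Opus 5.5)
Your proof of \eqref{RmuL2} is the paper's argument in operator language. The paper uses the pointwise identity $|I_+(\psi)|^2=4\pi\,\Im\langle\psi,X^*\psi\rangle$ (Lemma \ref{Xstarff}) and then $\int_{-R}^R(\mathcal P_\mu X^*-x)^{-1}dx\to i\pi\,\mathrm{Id}$. That is exactly your rank-one identity $A-A^*=-\frac{i}{2\pi}\langle\cdot,y\rangle y$ combined with the contour argument, and the paper also records $\Vert\Phi\varphi_j\Vert=1$.

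For \eqref{RmuH1/2} your route is genuinely different from the paper's. The paper obtains $H_{R^0_\mu}\Phi\varphi_j=\sigma_j\mathrm{e}^{i\alpha_j}\Phi\varphi_j$ dynamically:
\begin{enumerate}
\item it transports the Lax-pair identity $H_{u(t)}\Omega_t\varphi_j=\sigma_j\mathrm{e}^{i\alpha_j-it\sigma_j^2}\Omega_t\varphi_j$ along the flow, after approximating $u_0$ by smoother data;
\item it passes to the weak limit \eqref{weakstar} in the frame moving at speed $\mu^2/2\pi$, using compactness of $Q_j(t\omega_\mu,\cdot)$;
\item it kills the quasi-periodic remainder with Proposition \ref{prop:cancel};
\item it concludes with the Kronecker rank bound.
\end{enumerate}

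You derive the same intertwining $H_{R^0_\mu}\Phi=\Phi K$ from static linear algebra on $E_\mu$. This works, but the identity that closes it is missing from your proposal. You need $Ky=\mathcal P_\mu u_0$, which follows from $I_+(\varphi_k)=\mu\mathrm{e}^{-i\alpha_k}/\sigma_k$. Adding Lemma \ref{lem:com} at $\theta=0$ to $K(A-A^*)f=\frac{i}{2\pi}\overline{\langle f,y\rangle}Ky$ then gives $AK=KA^*$. Equivalently, $(A-x)^{-1}K=K(A^*-x)^{-1}$ for real $x$, so the combination you wrote vanishes identically, and that vanishing is the key fact.

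With the symmetry $\langle Ka,b\rangle=\langle Kb,a\rangle$ this gives $\Phi(Kf)(x)=\frac1{2i\pi}\langle(A-x)^{-1}\mathcal P_\mu u_0,f\rangle$. The resolvent identity you already used for the isometry gives
\[R^0_\mu\overline{\Phi f}=\frac{1}{2i\pi}\langle(A-x)^{-1}\mathcal P_\mu u_0,f\rangle-\frac{1}{2i\pi}\langle(A^*-x)^{-1}\mathcal P_\mu u_0,f\rangle ,\]
and the last term lies in $L^2_-$ because $\mathrm{spec}(A^*)\subset\C_+$. Hence $H_{R^0_\mu}\Phi f=\Phi(Kf)$ directly. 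You then need no characterization theorem for operators satisfying $S(\eta)^*\Gamma=\Gamma S(\eta)$; your ``cleanest route'' would need the same identity $AK=KA^*$ plus such a uniqueness result. Your range argument, or Kronecker's theorem as in the paper, then finishes the proof.

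What your approach buys:
\begin{enumerate}
\item a self-contained proof that avoids the flow, the approximation step and Proposition \ref{prop:cancel};
\item one resolvent identity handling both identities, whereas the paper remarks that the analogue ``seems more intricate'' for the second;
\item the stronger statement that $\Phi$ is a unitary onto $\mathrm{Ran}\,H_{R^0_\mu}$ conjugating $(A,K)$ to $(X^*,H_{R^0_\mu})$, and it adapts to every $\theta\in\T^{J_\mu}$.
\end{enumerate}
The paper's approach, in turn, needs no new algebraic identity and reuses the machinery already built to show that $R_\mu$ solves \eqref{Szego}.
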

\begin{proof}
First we prove \eqref{RmuL2}, relying on an argument from Lemma 6.7 in \cite{GPJEMS}. From \eqref{R_mu}, we have
\[ \forall z\in \C_+, R_\mu^0(z)=\frac 1{2i\pi}I_+(\psi_\mu ^0(z,\cdot )),\quad  \psi_\mu^0(z,\cdot ):=(\mathcal P_\mu X^*-z)^{-1}\mathcal P_\mu u_0,\]
and this identity extends to $z=x\in \R$ in view of Proposition \ref{invertibility} with $\theta =0$. We infer, for every $x\in \R$,
\begin{align*}
|R_\mu ^0(x)|^2&=\frac{1}{4\pi^2}|I_+(\psi_\mu ^0(x,\cdot ))|^2\\
&=\frac{1}{\pi }\mathrm{Im}\langle \psi_\mu^0(x,\cdot ),X^* \psi_\mu^0(x,\cdot )\rangle \\
&=\frac{1}{\pi}\mathrm{Im}\langle \psi_\mu^0(x,\cdot ),\mathcal P_\mu X^* \psi_\mu^0(x,\cdot )\rangle \\
&=\frac{1}{\pi}\mathrm{Im}\langle \psi_\mu^0(x,\cdot ),\mathcal P_\mu u_0+x \psi_\mu^0(x,\cdot )\rangle \\
&=\frac{1}{\pi}\mathrm{Im}\langle \psi_\mu^0(x,\cdot ),\mathcal P_\mu u_0\rangle.
\end{align*}
Consequently,
\begin{equation}\label{eq:RmuL2}
 \Vert R_\mu ^0\Vert_{L^2}^2=\frac{1}{\pi}\mathrm{Im}\lim_{R\to \infty}\int_{-R}^R \langle \mathcal (\mathcal P_\mu X^*-x)^{-1}\mathcal P_\mu u_0,\mathcal P_\mu u_0\rangle\, dx.
 \end{equation}
Recall from Remark \ref{rem:invertible} that $\mathcal P_\mu X^* -z $ is invertible on $E_\mu$ for every $z\in\overline  \C_+$. Hence, by the Cauchy integral formula, we have, in $\mathcal L(E_\mu)$, 
\[ \int_{-R}^R (\mathcal P_\mu X^*-x)^{-1}\, dx =-\int_0^\pi (\mathcal P_\mu X^*-R\mathrm{e}^{i\theta})^{-1}iR\mathrm{e}^{i\theta}\, d\theta \td_R,\infty i\pi \mathrm{Id_{E_\mu}},\]
which, in view of \eqref{eq:RmuL2}, leads to \eqref{RmuL2}.

Let us come to the proof of \eqref{RmuH1/2}. Since a similar argument to the previous one seems more intricate for this identity, we are going to prove that, for every $j\in J_\mu$, $\sigma_j^2$ is an eigenvalue of $H_{R_\mu ^0}^2$. Since the degree of the denominator of $R_\mu ^0$ is $|J_\mu|$, this will immediately prove that
\[ \mathrm{Tr}(H_{R_\mu ^0}^2)=\sum_{j\in J_\mu} \sigma_j^2,\]
which, in view of \eqref{TrH2}, is \eqref{RmuH1/2}. Hence the proof reduces to the following lemma.
\begin{lemma}\label{spectralRmu}
For every $j\in J_\mu$, $z\in \overline {\C_+}$, define
\[ q_j(z):=\frac{1}{2i\pi} I_+((\mathcal P_\mu X^*-z)^{-1}\varphi_j).\]
Then $\Vert q_j\Vert_{L^2}=1$ and $H_{R_\mu ^0}q_j =\sigma_j\mathrm{e}^{i\alpha_j}q_j.$
\end{lemma}
Let us prove Lemma \ref{spectralRmu}. The first identity is a direct consequence of the above calculation, replacing $\mathcal P_\mu u_0$ by $\varphi_j$. For the second identity, we are going to rely on Proposition \ref{prop:cancel},  \eqref{weakstar} and Corollary \ref{cor:explicit} from the explicit formula. The approach has some similarity with  the proof of the conservation of energy in \cite{GL26}. 
Let us first assume $u_0\in H^s_+$ for some $s>1/2$. From \eqref{UHu}, we infer
\[ H_{u(t)}U(t)\varphi_j=\sigma_j \mathrm{e}^{i\alpha_j}\varphi_j,\]
so that, using  \eqref{UOmega},
\begin{equation}\label{HuOmegaphi}
H_{u(t)} \Omega_t \varphi_j=\sigma _j\mathrm{e}^{i\alpha_j-it\sigma_j^2}\Omega_t \varphi_j. 
\end{equation}
Identity \eqref{HuOmegaphi} still holds if $u_0\in H^{1/2}_+$, by a simple approximation argument and the strong continuity of the Szeg\H{o} flow on $H^{1/2}_+(\R)$. Let us now translate the $x$ variable by $t\mu ^2/2\pi$. Then we observe that 
\[ \Omega_t \varphi_j\left (x+t\frac{\mu^2}{2\pi}\right )=Q_j(t\omega_\mu, x),\]
where $Q_j:\T^{J_\mu}\times \R$ is defined by
\[Q_j(\theta,x):=\frac{1}{2i\pi}I_+((\mathcal P_\mu X^*+L_\mu (\theta)-x)^{-1}\varphi_j).\]
Notice that $Q_j(\theta,\cdot )$ is rational function in $L^2_+(\R)$, with a denominator of degree at most $|J_\mu|$, and with periodic coefficients. Consequently, the family $\left (\Omega_t \varphi_j\left (x+t\frac{\mu^2}{2\pi}\right )\right )_{t\in \R}$ belongs to a compact subset of $L^2_+(\R)$. In view of \eqref{weakstar} and 
\eqref{HuOmegaphi}, \eqref{RG}, we infer 
\[ H_{G(t\omega_\mu)}Q_j(t\omega_\mu)-\sigma_j \mathrm{e}^{i\alpha_j-it\sigma_j^2}Q_j(t\omega_\mu)\td_t,\infty 0\]
in $L^2_+(\R)$. Applying Proposition \ref{prop:cancel} with $\e(t)=0$, we conclude
$$\forall t\in \R,\quad  H_{G(t\omega_\mu)}Q_j(t\omega_\mu)-\sigma_j \mathrm{e}^{i\alpha_j-it\sigma_j^2}Q_j(t\omega_\mu)=0,$$
and setting $t=0$ completes the proof of Lemma \ref{spectralRmu}.  
\end{proof}

\subsection{Strong convergence in $H^{\frac 12}_+$}
Given $\e >0$, let us calculate  $\Vert v_{{\e}}\Vert_{H^{\frac 12}}^2$ where $$v_{{\e}}(t,x):= \sum_{{\mu\ge \e}} R_\mu(t,x).$$
First we check that, for $t$ large enough, the terms are almost orthogonal in $H^{1/2}$. Indeed, we recall from \eqref{RG} that
$$R_\mu \left (t,x+t\frac{\mu ^2}{2\pi}\right )=G_\mu (t\omega _\mu, x),$$
where $G_\mu (\theta,\cdot )$ belongs to a compact subset of $H^s_+(\R)$ for every $s$ in view of \eqref{eq:Gmu}. Therefore, for $\mu \ne \nu $,
$$\la R_\mu (t,\cdot ), R_\nu (t,\cdot )\ra _{H^{1/2}}=\left\la G_\mu \left (t\omega _\mu, \cdot -t\frac{\mu ^2-\nu ^2}{2\pi}\right ),G_\nu (t\omega _\mu, \cdot )\right \ra _{H^{1/2}}\td_t,\infty 0.$$
Consequently, we have 
$$\Vert v_N(t)\Vert_{H^{\frac 12}}^2= \sum_{N\mu\ge 1}  \Vert R_\mu(t)\Vert_{H^{\frac 12}}^2 +o(1),\quad t\to \infty.$$
As  $R_\mu$ is a  solution of the cubic Szeg\H{o} equation, one has 
$$\Vert R_\mu(t)\Vert_{L^2}^2=\Vert R_{\mu}^0\Vert_{L^2}^2,\quad \langle DR_\mu(t,\cdot ),R_\mu(t,\cdot )\rangle =\langle DR_\mu^0,R_\mu^0\rangle ,$$
and thus, in view of identities \eqref{RmuL2} and \eqref{RmuH1/2},  
 \begin{equation}\label{RmuH1/2bis}
 \Vert R_\mu(t)\Vert_{H^{\frac 12}}^2=\Vert \mathcal P_\mu u_0\Vert_{L^2}^2+2\pi \sum_{j\in J_\mu}\sigma_j^2.
 \end{equation}
We have
$$\Vert u(t)-v_{{\e}}(t)\Vert_{H^{\frac 12}}^2=\Vert u_0\Vert_{H^{\frac 12}}^2+\Vert v_{{\e}}(t)\Vert_{H^{\frac 12}}^2-2{\rm Re}\langle  u(t), v_{{\e}}(t)\rangle_{H^{\frac 12}}.$$
Then we observe that
  \begin{align*}\langle  u(t), v_{{\e}}(t)\rangle_{H^{\frac 12}}&=\sum_{\mu\ge {\e}}\langle u(t),R_\mu(t)\rangle_{H^{\frac 12}}\\
  &=\sum_{\mu \geq {\e}}\left \langle u\left (t,\cdot +t\frac{\mu ^2}{2\pi}\right ),R_\mu\left (t,\cdot +t\frac{\mu ^2}{2\pi}\right )\right \rangle_{H^{\frac 12}} ,
   \end{align*} and, using  \eqref{weakstar} and the strong compactness of $R_\mu\left (t,\cdot +t\frac{\mu ^2}{2\pi}\right )$, we conclude that
 \begin{align*}
 \lim_{t\to \infty}\Vert u(t)-v_{{\e}}(t)\Vert_{H^{\frac 12}}^2&=\Vert u_0\Vert_{H^{\frac 12}}^2 -\sum_{\mu \geq {\e}}\Vert R_\mu^0\Vert_{H^{\frac 12}}^2\\
 &=\sum_{\mu< {\e}}\left [\Vert \mathcal P_\mu u_0\Vert_{L^2}^2 +2\pi \sum_{j\in J_\mu}\sigma_j^2  \right ],
 \end{align*}
 which tends to $0$ as ${\e\to 0}$. This completes the proof of Theorem \ref{Main2}.
\subsection{Proof of Corollary \ref{solitonres}} Let us assume that $u_0$ is strongly generic. Then $N_\mu =1$ for every $\mu \in \mathcal M$. If $(\varphi_j)$ is an orthonormal basis of eigenfunctions of $H_{u_0}^2$ such that $\langle u_0,\varphi_j\rangle =\mu_j>0$, we have $E_{\mu_j}=\C \varphi_j$ and
$$\mathcal P_{\mu_j}f= \langle f,\varphi_j\rangle \varphi_j,\quad  \mathcal P_\mu L_{u_0}(t)\varphi_j =c_{\mu_j} t\varphi_j.$$
Consequently, in view of \eqref{R_mu}, we recover the expression \eqref{TravelingWaves}, \eqref{soliton}, of a soliton solution,
\begin{align*}
R_{\mu_j} (t,z)&=\frac{\mu_j \mathrm{e}^{-it\sigma_j^2}}{2i\pi}\frac{I_+(\varphi_j)}{\langle X^*\varphi_j,\varphi_j\rangle -z+c_{\mu_j}t}\\
&=\frac{i\mu_j^2}{2\pi\sigma_j}\frac{ \mathrm{e}^{-i(\alpha_j+t\sigma_j^2)}}{z-c_{\mu_j} t-\langle X^*\varphi_j,\varphi_j\rangle )}.
\end{align*}
The proof of Corollary \ref{solitonres} is complete.
\printbibliography
\end{document}